\newcommand{\eps}{\varepsilon}
\newcommand{\C}{{\mathbb C}}
\newcommand{\D}{{\mathbb D}}
\newcommand{\R}{{\mathbb R}}
\newcommand{\Q}{{\mathbb Q}}
\newcommand{\Z}{{\mathbb Z}}
\newcommand{\cB}{{\mathcal B}}
\newcommand{\cF}{{\mathcal F}}
\newcommand{\cJ}{{\mathcal J}}
\newcommand{\cK}{{\mathcal K}}
\newcommand{\cU}{{\mathcal U}}
\newcommand{\cV}{{\mathcal V}}
\DeclareMathOperator{\dist}{dist}
\theoremstyle{plain}
\declaretheorem[numberwithin=section]{theorem}
\declaretheorem[sibling=theorem]{proposition}
\declaretheorem[sibling=theorem]{corollary}
\declaretheorem[sibling=theorem]{conjecture}
\declaretheorem[style=definition,sibling=theorem]{definition}
\declaretheorem[numbered=no,style=definition]{remark}
\declaretheorem[numbered=no,style=definition]{example}
\numberwithin{equation}{section}
\subjclass[2010]{Primary 37F50; Secondary 30D05}
\begin{document}
\title{Linearizability of Saturated Polynomials}
\author{Lukas Geyer}
\address{Montana State University \\ Department of Mathematical
  Sciences \\ Bozeman, MT 59717--2400 \\ USA}
\email{geyer@montana.edu}
\maketitle

\begin{abstract}
  Brjuno and R\"ussmann proved that every irrationally indifferent
  fixed point of an analytic function with a Brjuno rotation number is
  linearizable, and Yoccoz proved that this is sharp for quadratic
  polynomials. Douady conjectured that this is sharp for all rational
  functions of degree at least 2, i.e., that non-M\"obius rational
  functions cannot have periodic Siegel disks with non-Brjuno rotation
  numbers. We prove that Douady's conjecture holds for the class of
  polynomials for which the number of infinite tails of critical
  orbits in the Julia set equals the number of irrationally
  indifferent cycles. As a corollary, Douady's conjecture holds for
  the families of polynomials $P(z) = z^d + c$ and $Q(z) = z +
  cz^{d-1} + z^d$.
\end{abstract}

\tableofcontents

\section{Introduction and Statement of Results}
Let $f(z) = \lambda z + O(z^2)$ be an analytic function defined in a
neighborhood of zero in the complex plane. We say that $f$ is
\emph{linearizable} near the fixed point $0$ if it is locally
analytically conjugate to its linear part, i.e., if there exists an
analytic function $h(z) = z + O(z^2)$ and $\eps > 0$ such that
$f(h(z)) = h(\lambda z)$ for $|z|<\eps$.

In 1884, Koenigs proved that in the case $|\lambda| \notin \{ 0, 1\}$
of attracting or repelling fixed points every analytic function is
linearizable \cite{koenigs1884}. If $\lambda = 0$, linearizability
obviously implies that $f$ is constant, and in the non-constant
super-attracting case $f(z) = a_m z^m + O(z^{m+1})$, with $m \ge 2$,
$a_m \ne 0$, the function is always locally conjugate to the power map
$z \mapsto z^m$. (This theorem is usually attributed to Boettcher who
first stated it with a sketch of a proof in 1904. The first complete
proof was given by Ritt in \cite{ritt1920}.) In the rationally
indifferent (or parabolic) case where $\lambda = e^{2\pi i p/q}$ is a
root of unity, it is easy to see that the function is linearizable iff
the $q$-th iterate $f^q$ is the identity in a neighborhood of $0$.
For globally defined
functions such as polynomials, rational functions, entire or
meromorphic functions, this never happens unless the function is a
fractional linear transformation. The general question of classifying
local normal forms in the rationally indifferent case turns out to be
complicated, and it has been completed by \'Ecalle \cite{ecalle1981}
and Voronin \cite{voronin1981}.

In this paper we are going to be concerned with the irrationally
indifferent case where $\lambda = e^{2\pi i \alpha}$ with $\alpha$
irrational. In this situation, there are quite a few results, but
several open questions remain. It turns out that the question of
linearizability is closely tied to number-theoretic properties of
$\alpha$. Denote by $p_n/q_n$ the \emph{convergents} of $\alpha$,
i.e. the best rational approximations, obtained by truncating the
continued fraction expansion of $\alpha$. The set of \emph{Brjuno
  numbers} is defined as $\cB = \{ \alpha \in \R \setminus \Q : \sum
q_n^{-1}\log q_{n+1} < \infty \}$.  After earlier results about
linearizability by Cremer and Siegel, the following theorem combines
the results from \cite{Russmann1967}, \cite{Brjuno1971}, and
\cite{Yoccoz1995}.
\begin{theorem}[R\"ussmann, Brjuno, Yoccoz]
  \label{thm:RBY}
  If $\alpha \in \cB$, then every germ $f(z) = e^{2\pi i \alpha} z +
  O(z^2)$ is linearizable. If $\alpha \in \R \setminus \cB$, then the
  quadratic polynomial $P(z) = e^{2\pi i \alpha} z + z^2$ is not
  linearizable.
\end{theorem}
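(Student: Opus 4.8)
The proof consists of two essentially independent halves: a \emph{positive} result (Brjuno--R\"ussmann), that $\alpha \in \cB$ forces linearizability of \emph{every} germ $f(z) = \lambda z + O(z^2)$ with $\lambda = e^{2\pi i\alpha}$, and a \emph{sharpness} result (Yoccoz), that for $\alpha \notin \cB$ the specific quadratic $P_\alpha(z) = \lambda z + z^2$ fails to be linearizable at $0$. For the positive half the natural plan is to build the conjugacy $h$ as a formal power series (the Schr\"oder--Koenigs series) and then prove its convergence by a majorant (KAM-type) estimate; the Brjuno condition enters exactly to control the accumulation of the small divisors $\lambda^k - 1$. For the sharpness half one works instead with the conformal radius $r(\alpha) \in [0,\infty)$ of the Siegel disk of $P_\alpha$ at $0$, defined to be $0$ precisely when $P_\alpha$ is not linearizable there, and reduces everything to the estimate $\log r(\alpha) \le C - B(\alpha)$ with $C$ universal, where $B(\alpha) = \sum_{n} q_n^{-1}\log q_{n+1}$ is the Brjuno sum; this makes $r(\alpha) = 0$ as soon as $\alpha \notin \cB$.

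\textbf{The Brjuno--R\"ussmann direction.} Write $f(z) = \lambda z + \sum_{n\ge 2} a_n z^n$ and seek $h(z) = z + \sum_{n\ge 2} h_n z^n$ with $f(h(z)) = h(\lambda z)$. Comparing coefficients of $z^n$ gives $(\lambda^n - \lambda) h_n = Q_n(a_2,\dots,a_n; h_2,\dots,h_{n-1})$ with $Q_n$ a universal polynomial with nonnegative integer coefficients and no small divisors of its own; since $\lambda$ is not a root of unity, $\lambda^n - \lambda \ne 0$, so the $h_n$ are uniquely and recursively determined. Unwinding the recursion represents $h_n$ as a sum over finite rooted trees, each term being a product of input coefficients $a_k$ times a product $\prod_j (\lambda^{k_j}-1)^{-1}$ of inverse divisors over the internal vertices. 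The crux is then a purely number-theoretic bookkeeping lemma --- Brjuno's original combinatorial argument, or the streamlined version due to Davie --- showing that for every $n$ the largest such product of inverse divisors is at most $\exp\bigl(C\,n\,B(\alpha)\bigr)$; this uses that $|\lambda^k - 1|$ can be very small only when $k$ is near a convergent denominator $q_m$ of $\alpha$, together with the at-least-exponential growth of the $q_m$. Plugging this bound into the tree expansion and comparing with the explicit majorant series of an analytic function (e.g. the germ with $a_n$ replaced by $M\theta^{n}$) yields $\limsup_n |h_n|^{1/n} < \infty$, so $h$ converges on a disk and $f$ is linearizable. The only real obstacle here is the small-divisor bookkeeping lemma; everything else is formal manipulation and a radius-of-convergence estimate.

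\textbf{The Yoccoz direction.} First, an \emph{a priori bound}: if $|z| > 2$ then $|P_\alpha(z)| \ge |z|(|z|-1) > |z|$, so the filled Julia set of $P_\alpha$, and hence the Siegel disk when nonempty, lies in $\D(0,2)$; by monotonicity of the conformal radius under inclusion, $r(\alpha) \le 2$ for all $\alpha$. The main step is a \emph{renormalization} (first-return) argument modelled on the exact functional equation $\Phi(\alpha) = -\log\alpha + \alpha\,\Phi\bigl(G(\alpha)\bigr)$ satisfied by the Brjuno function $\Phi$ (which agrees with $B(\alpha)$ up to a bounded error), where $G(\alpha) = \{1/\alpha\}$ is the Gauss map: working in the Koenigs coordinate near $0$ and choosing a suitable fundamental sector, one shows that the first-return map of $P_\alpha$ to a small disk is, after an affine rescaling, a controlled univalent perturbation of $P_{G(\alpha)}$, and comparing conformal radii through this construction gives an inequality of the form $-\log r(\alpha) \ge -\log\alpha + \alpha\bigl(-\log r(G(\alpha))\bigr) - C$. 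Iterating this along the continued fraction expansion of $\alpha$ --- so that the successive rescaling factors are the products $\alpha_0\alpha_1\cdots\alpha_{n-1}$ of Gauss iterates, which are comparable to $1/q_n$ --- and using the a priori bound to absorb the tail, one arrives at $-\log r(\alpha) \ge B(\alpha) - C'$ (and, with the matching lower bound, at Yoccoz's two-sided formula $\log r(\alpha) = -B(\alpha) + O(1)$). In particular $\alpha \notin \cB \Rightarrow r(\alpha) = 0$, i.e. $P_\alpha$ has no Siegel disk at $0$ and is not linearizable there, which is the claimed sharpness.

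The decisive difficulty, and the place where complex-analytic geometry rather than formal algebra is doing the work, is the renormalization step of the Yoccoz half: one must construct the first-return map on a quantitatively controlled domain and show it is genuinely a small perturbation of the quadratic $P_{G(\alpha)}$, uniformly enough that the conformal-radius comparison can be iterated infinitely often. Modern treatments organize this via compactness arguments in spaces of univalent maps with marked fixed points; either way this is the technical heart of the theorem, whereas the Brjuno--R\"ussmann half is comparatively soft once the small-divisor lemma is in place.
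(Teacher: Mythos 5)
First, a point of comparison: the paper does not prove this theorem at all --- it is stated as a quotation of the results of R\"ussmann, Brjuno and Yoccoz, with citations to the original works --- so there is no internal proof to measure your argument against; the only fair comparison is with the literature your sketch is implicitly reproducing. Measured against that, your outline has the correct overall architecture (formal Schr\"oder series plus small-divisor majorant estimates for the positive half; renormalization along the continued fraction, the functional equation of the Brjuno sum, and an a priori bound on the conformal radius for the sharpness half). But as a proof it has genuine gaps: the two steps you yourself flag as ``the only real obstacle'' and ``the technical heart'' --- the Brjuno/Davie counting lemma bounding the product of inverse divisors by $\exp\bigl(CnB(\alpha)\bigr)$, and the quantitative first-return construction with its conformal-radius comparison --- are precisely the content of the theorem, and neither is carried out. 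What you have is an accurate road map, not a proof; the second half in particular occupies the bulk of Yoccoz's memoir.

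Beyond incompleteness, one specific assertion in the Yoccoz half is wrong as stated: the renormalized first-return map of $P_\alpha$ is not ``a controlled univalent perturbation of $P_{G(\alpha)}$,'' and no such self-similarity of the quadratic family under renormalization holds uniformly in $\alpha$. Yoccoz's actual argument leaves the quadratic family after the first step: one passes to a compact class of univalent germs with rotation number $G(\alpha)$, proves the size inequality for that class (using Koebe-type a priori bounds in place of your $r(\alpha)\le 2$ estimate at every stage), and iterates within the univalent class; the quadratic enters only because a suitable restriction of it lands in that class. If you insist on comparing the return map with $P_{G(\alpha)}$ itself, the inequality $-\log r(\alpha) \ge -\log\alpha + \alpha\bigl(-\log r(G(\alpha))\bigr) - C$ is not something you can iterate as written, since the object produced at the next stage is not the quadratic of rotation number $G(\alpha)$. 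To repair the sketch you must either reproduce Yoccoz's univalent-class renormalization or substitute another rigorous mechanism from the literature yielding the bound $\log r(\alpha) \le -B(\alpha) + C$, and separately supply a proof of the small-divisor counting lemma in the first half.
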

By passing to the appropriate iterate and conjugating, periodic points
of analytic maps in one complex dimension are handled and classified
similarly. For polynomials and rational functions of degree at least
2, irrationally indifferent periodic points are contained in the Fatou
set iff they are linearizable. The Fatou components containing them
are always simply connected and are called \emph{Siegel disks}. The
first part of the previous theorem shows that irrationally indifferent
periodic points with Brjuno rotation numbers are always centers of
Siegel disks. Let us call a Siegel disk \emph{exotic} if its rotation
number is not Brjuno. Note that exotic Siegel disks of polynomials and
rational functions of degree at least 2 (if they exist) will always
have irrational rotation numbers. With this definition, we can state
the biggest open conjecture concerning linearizability, originally
posed in \cite{Douady1986}.
\begin{conjecture}[Douady]
  Polynomials and rational functions of degree $d\ge 2$ do not have
  exotic Siegel disks.
\end{conjecture}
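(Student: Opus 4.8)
The plan is to argue by contradiction and to localize the problem to an analytic germ. Suppose $f$ is a rational map of degree $d\ge 2$ — polynomials included, the super-attracting fixed point at $\infty$ playing no role — with a Siegel disk; after replacing $f$ by an iterate and conjugating by a M\"obius transformation we may assume the disk $\Delta$ is centered at $0$, that $f(z)=e^{2\pi i\alpha}z+O(z^2)$, and that $\alpha\notin\cB$. The existence of $\Delta$ is exactly the statement that the germ of $f$ at $0$ is linearizable, so it suffices to show that this germ is \emph{not} linearizable. Now a linearizable germ with a non-Brjuno rotation number is, in isolation, no contradiction at all — conjugating the rotation $z\mapsto e^{2\pi i\alpha}z$ by any non-linear entire $h(z)=z+O(z^2)$ produces one — so the argument must exploit that our germ extends to a degree-$d$ rational map.

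The bridge from the global map back to the germ is the theorem of Fatou and Ma\~n\'e that $\partial\Delta$ is contained in the closure of the postcritical set; since there are only finitely many critical points, some critical point $\omega$ of $f$ has a forward orbit that accumulates on $\partial\Delta$. It is this captured critical orbit that matters: it forces the germ of $f$, suitably restricted and normalized, into a \emph{rigid} family — morally a quadratic-like class or the Inou--Shishikura class — in which, unlike for a free germ such as $h\circ R_\alpha\circ h^{-1}$, the size of the maximal linearization domain is pinned to the arithmetic of $\alpha$. The Yoccoz-type estimate one needs in this setting is $\log\rho\le -B(\alpha)+O(1)$, with $\rho$ the conformal radius of the linearization domain and $B$ the Brjuno function; since $\alpha\notin\cB$ means $B(\alpha)=\infty$, this forces $\rho=0$, i.e.\ there is no Siegel disk — the contradiction.

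To make this rigidity quantitative I would use near-parabolic renormalization. When the continued-fraction entries of $\alpha$ are eventually $\ge N$, the universal constant from Inou and Shishikura's near-parabolic renormalization, renormalize the restriction of $f$ to a petal-shaped sector at $0$; because $\partial\Delta$ carries the captured critical point, this restriction is, up to a bounded correction, a map of the Inou--Shishikura class $\cF$, and renormalization yields a tower $R^n f\in\cF$ with rotation numbers $G^n(\alpha)$, where $G$ is the Gauss map. Linearizability of $f$ at $0$ is governed by the geometry of the changes of coordinates of this tower, and since $B(\alpha)=-\log\alpha+\alpha\,B(G\alpha)$ for $\alpha\in(0,1)$, the divergence $B(\alpha)=\infty$ is inherited at every level; P\'erez-Marco-type hedgehog estimates on the renormalized maps then produce periodic orbits arbitrarily near their fixed points, which pull back through the changes of coordinates to small cycles of $f$ accumulating at $0$ — impossible if $f$ is linearizable there. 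For polynomials there is a parallel and cleaner route: straighten a polynomial-like restriction around $\Delta$ onto a low-dimensional model family and import the second half of the R\"ussmann--Brjuno--Yoccoz theorem, Yoccoz's parameter-space estimate, directly; this works neatly exactly when each Siegel disk ``owns'' a critical point of its own, which is the situation analyzed in this paper.

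The main obstacle is carrying the last two steps through for \emph{every} non-Brjuno $\alpha$ and \emph{every} degree. Small continued-fraction entries fall outside the Inou--Shishikura regime and must be absorbed by an explicit Yoccoz-type continued-fraction estimate interleaved with the parabolic renormalization, as already happens in the quadratic case; and, far more seriously, for a rational map of arbitrarily high degree there is no known way to make the Fatou--Ma\~n\'e accumulation quantitative, nor to guarantee that the restriction near $\Delta$ lands in any renormalization-invariant class — so the small-cycle mechanism cannot be triggered in general and the arithmetic rigidity remains out of reach. This is why Douady's conjecture is still open; what one can prove unconditionally is precisely the case where the count of critical orbits in the Julia set forces the localization step, so that every Siegel disk is controlled by a critical point of its own.
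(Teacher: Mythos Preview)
This statement is recorded in the paper as a \emph{conjecture}, not a theorem: the paper explicitly notes that it remains open even for cubic polynomials and quadratic rational maps, and the paper's contribution is the special case of Julia-saturated polynomials. So there is no proof in the paper to compare your proposal against, and your final paragraph is the honest one --- you yourself concede that the argument does not close.

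The gaps you flag are real, but let me sharpen where the sketch actually breaks rather than merely running out of steam. The sentence ``because $\partial\Delta$ carries the captured critical point, this restriction is, up to a bounded correction, a map of the Inou--Shishikura class $\cF$'' is the load-bearing step and it is not justified. Membership in $\cF$ is a strong structural condition (a specific covering type over a fixed domain, with a single simple critical value in a prescribed location); Fatou--Ma\~n\'e only tells you a critical orbit accumulates on $\partial\Delta$, which says nothing about the local covering degree, the position of the critical value, or univalence on the required petal. Without that, the renormalization tower $R^n f\in\cF$ does not get started, so the small-cycles mechanism never fires. The ``cleaner route'' for polynomials has the same shape of gap: a polynomial-like restriction around $\Delta$ need not straighten to anything of low degree or with a single relevant critical orbit, so you cannot simply import Yoccoz's quadratic estimate. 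What the paper does instead is impose exactly the hypothesis that makes this localization honest --- each irrationally indifferent cycle owns one critical orbit and no more --- and then builds an explicit one-parameter perturbation that is essentially quadratic at the Siegel center and $J$-stable by a Fatou--Shishikura count, reducing to Yoccoz via \autoref{prop:maxprinc}. Your closing sentence identifies this correctly; the preceding paragraphs should be read as a heuristic for why the conjecture is plausible, not as a proof.
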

Yoccoz's result shows that quadratic polynomials do not have fixed
exotic Siegel disks, and a simple renormalization argument shows that
they cannot have periodic exotic Siegel disks either,\footnote{If a
  quadratic polynomial $f$ has a Siegel periodic orbit of period $p$
  with multiplier $\lambda$, then $f^p$ is renormalizable, so it is
  topologically conjugate to a quadratic polynomial with a fixed
  Siegel disk with multiplier $\lambda$.  The renormalization is
  constructed using rational external rays and potentials, for details
  see e.g.\ \cite{Milnor2000}.}so the conjecture is true for quadratic
polynomials.  However, it is still open even for cubic polynomials and
for quadratic rational functions.  There are very strong results about
``generic'' polynomials, see \cite{PerezMarco1993} and
\cite{PerezMarco2001}, as well as several results about families or
maps satisfying certain special conditions, see \cite{Geyer1999},
\cite{Geyer2001}, \cite{Okuyama2001}, \cite{Okuyama2005}, and
\cite{Cheraghi2010}.

The following theorem is the main result of this paper, establishing
Douady's conjecture for a class of polynomials.
\begin{restatable*}{theorem}{JuliaSaturatedTheorem}
  Julia-saturated polynomials do not have exotic Siegel disks.
\end{restatable*}
Here a polynomial is \emph{Julia-saturated} if the number of infinite
tails of critical orbits in the Julia set equals the number of
irrationally indifferent cycles. Here a \emph{critical orbit} of a
polynomial $f$ is the
forward orbit $\{ f^n(c) : n \ge 1 \}$ of a critical point $c$ of $f$,
and a \emph{critical orbit tail} is an equivalence class of critical
orbits where two critical orbits are equivalent if their intersection
is non-empty. It is easy to check that this is indeed an equivalence
relation, and that two equivalent critical orbits are either both
finite (corresponding to periodic or preperiodic critical orbits) or
both infinite. An \emph{infinite critical orbit tail} is an
equivalence class of infinite critical orbits.

As an illustration of this somewhat technical condition, the following
corollary shows Douady's conjecture for two explicit families of
polynomials.
\begin{restatable*}{corollary}{MainCorollary}
  There are no exotic Siegel disks in the families $P_{c,d}(z) = z^d +
  c$ and $Q_{c,d}(z) = z + cz^{d-1} + z^d$ for $d \ge 2$ and $c \in
  \C$.
\end{restatable*}

In \cite{BuffCheritat2011}, Buff and Ch\'eritat gave a different proof
of the non-existence of fixed exotic Siegel disks under the condition
that the number of infinite tails of critical orbits equals the number of
indifferent periodic cycles. While this condition is more restrictive
than being Julia-saturated, and their paper does not address periodic
(non-fixed) exotic Siegel disks, their methods give very sharp bounds
on the sizes of Siegel disks in the case of Brjuno rotation numbers.

This paper is structured as follows. \Cref{sec:background-notation}
reviews some background and notation, in \cref{sec:quadr-pert-line} we
introduce concepts of certain polynomial perturbations and uniform
linearizability, and prove our central technical result,
\autoref{prop:maxprinc}. It says that if a linearizable germ $f(z) =
\lambda z + O(z^2)$ admits an essentially quadratic and uniformly
linearizable perturbation, then the quadratic polynomial $P(z) =
\lambda z + z^2$ is linearizable. Here an essentially quadratic
perturbation is a generalization of perturbations of the form $f_a(z)
= f(z) + az^2$ which have been used by Yoccoz and P\'erez-Marco in
similar contexts before. \Cref{sec:analyt-famil-polyn} contains some
background material and folklore results about polynomial-like maps
and analytic families of them, and \cref{sec:j-stability} reviews the
concept of $J$-stability for polynomial-like maps and has a
straightforward result relating $J$-stability, irrationally
indifferent periodic points and uniform linearizability,
\autoref{cor:JStableUniformlyLin}. \Cref{sec:fatou-shish-ineq}
contains two strong versions of the Fatou-Shishikura inequality,
relating the number of non-repelling cycles to the number of critical
orbit tails. Both of these follow from results by Kiwi
\cite{Kiwi2000}, combined with some standard renormalization
techniques. \Cref{sec:satur-polyn} defines saturated and
Julia-saturated polynomials as those polynomials for which equality
holds in one of the versions of the Fatou-Shishikura inequality. It
then combines the results from all previous sections in order to show
that these classes of polynomials do not have exotic Siegel disks. For
saturated polynomials one can explicitly write down a perturbation
which is essentially quadratic for every irrationally indifferent
periodic point and for which the strong form of the Fatou-Shishikura
inequality establishes $J$-stability. Lastly, Julia-saturated
polynomials can be turned into saturated polynomials with some
standard renormalization techniques and results by McMullen from
\cite{McMullen1988}.

I am indebted to the anonymous referee for pointing out some
inconsistencies in the definition of persistent indifferent periodic
points in the literature. In the appendix we address this and related
technical problems related to $J$-stability, as well as the way we
chose to deal with them in our paper.

{\bf Acknowledgments.} I would like to thank Christian Henriksen for
valuable discussions about an early version of the proof, and Joseph
Manlove for many helpful questions and suggestions about all parts of
this paper. I am also indebted to the referee whose suggestions
greatly improved the paper. This work evolved over a long time period
at the Mittag-Leffler Institute in Stockholm, the University of
Michigan, and Montana State University. I would like to thank all
three institutions for providing a great research environment, and I
am particularly grateful for funding from the Mittag-Leffler Institute
and the Alexander von Humboldt foundation.

\section{Background and Notation}
\label{sec:background-notation}
We assume that the reader is familiar with the basics of complex
dynamics as covered in \cite{carlesongamelin1993} or
\cite{Milnor2006}. In this section will review a few basic facts
and explain our notation.

For a polynomial $P$ of degree $d \ge 2$ we will denote the
\emph{Julia set} by $J(P)$, the \emph{filled-in Julia set} by $K(P)$,
and the \emph{basin of infinity} by $A_\infty(P)$. For a point
$z \in \C$ the \emph{forward orbit} is defined as
$O_P^+(z) = \{ P^n(z): n \ge 1 \}$, and its \emph{$\omega$-limit set}
$\omega_P(z)$ is defined as the set of all limits of sequences of
iterates $P^{n_k}(z)$ for sequences $n_k \to \infty$. A tuple of
distinct points $Z=(z_1, \ldots, z_q)$ is a \emph{periodic cycle of
  (minimal) period $q$} if $P(z_k) = z_{k+1}$ for
$k = 1, \ldots, q-1$, and $P(z_q) = z_1$. Each point in the cycle is a
\emph{periodic point of (minimal) period $q$}.  The \emph{multiplier}
of the cycle is $\lambda(Z) = (P^q)'(z_1) = \prod_{k=1}^q
P'(z_k)$. The cycle $Z$ is \emph{super-attracting} if $\lambda = 0$,
\emph{attracting} if $0<|\lambda|<1$, \emph{indifferent} if
$|\lambda| = 1$, and \emph{repelling} if $|\lambda|>1$. In the
indifferent case we say that $Z$ is \emph{rationally indifferent} (or
\emph{parabolic}) if $\lambda$ is a root of unity, \emph{irrationally
  indifferent} otherwise. Irrationally indifferent cycles are either
contained in the Fatou set, in which case the $q$-th iterate $P^q$ is
locally conjugate to a rotation near every point in the cycle, or they
are contained in the Julia set. In the first case we call $Z$ a
\emph{Siegel cycle}, in the second case a \emph{Cremer cycle}. We will
also use all of these terms for the periodic points $z_1, \ldots, z_q$
in the cycle, as well as the periodic Fatou components associated to
those (except in the repelling and Cremer case which do not have
associated Fatou domains.)  E.g., a periodic Siegel point is a
periodic point in an irrationally indifferent cycle contained in the
Fatou set, and a parabolic Fatou component is a periodic component of
the Fatou set on which the iterates converge locally uniformly to a
parabolic periodic cycle. A parabolic Fatou component associated to a
parabolic cycle $Z$ is also called a \emph{petal} associated to $Z$.

Let $Z$ be a parabolic periodic cycle of period $q \ge 1$ of the
polynomial $P$ with multiplier $\lambda = e^{2\pi i s/t}$ where $s,t$
are relatively prime integers with $t \ge 1$. Then $P^{tq} (z) = z_1 +
a_{m+1} (z-z_1)^{m+1} + O(|z-z_1|^{m+2})$ with $a_{m+1} \ne 0$ for
some $m = t r$, where the positive integer $r = r(Z)$ is the number of
invariant cycles of petals attached to the cycle $Z$.  Lastly, we
define $\tau(Z) = m+1$ as the \emph{tangency index} and $tq$ as the
\emph{tangency period} of $Z$. (These definitions are not standard,
but they prove to be useful later on.)

\section{Quadratic Perturbations and Linearizability}
\label{sec:quadr-pert-line}
The main tool to show non-existence of exotic Siegel disks in this
paper is the existence of certain algebraic perturbation families with
(uniformly) persistent Siegel disks.

In order to motivate the following definition, let us first sketch the
general idea in the simple case of one irrationally indifferent fixed
point at zero, basically following the argument from
\cite{PerezMarco1993}: Assume that $f(z) = \lambda z + O(z^2)$ has a
Siegel disk at $0$, and that we can find a perturbation family $f_a(z)
= f(z) + az^2 g(z)$ with $g(0) = 1$ for which we know that the Siegel
disk persists for $|a|$ sufficiently small. The idea now is to
consider this perturbation for large $|a|$, in particular for $|a| \to
\infty$. In order to obtain a sensible limit, we have to rescale $f_a$
by conjugation and introduce the family $F_b(w) = b^{-1}f_{1/b}(bw) =
b^{-1} f(bw) + w^2 g(bw)$ which extends analytically to $b=0$, with
$F_0(w) = \lambda w + w^2$ the quadratic polynomial. From the
construction we know that the family $F_b$ has a Siegel disk at $0$
for sufficiently large $|b|$, and finally a maximum principle for
linearization then shows that the quadratic polynomial $F_0$ also has
a Siegel disk. By Yoccoz's result (\Cref{thm:RBY}) this implies that
$\lambda = e^{2\pi i \alpha}$ with $\alpha \in \cB$, i.e., that the
Siegel disk for $f$ is not exotic.

The following definition is a slightly more flexible version of this
construction. The main reason to introduce the added flexibility is to
deal with periodic points without having to pass to iterates.
Notice that in all perturbations we consider, the constant
and linear parts remain unchanged.

\begin{definition}
  An \emph{analytic perturbation} of an analytic function $f$ defined
  in a neighborhood of $z_0 \in \C$ is an analytic function of two
  variables
  \[
    f(a,z)=f(z_0) + f'(z_0)(z-z_0) + \sum\limits_{n=2}^\infty
    f_n(a)(z-z_0)^n,
  \]
  defined in some neighborhood of $(0,z_0)$, with $f(0,z) = f(z)$,
  where the coefficients $f_n$ are analytic functions of $a$. It is an
  \emph{admissible perturbation} if the coefficients $f_n$ are
  polynomials of degree $d_n < n$. It is a \emph{quadratic
    perturbation} of $f$ at $z_0$ if $d_2=1$ and $d_n\leq 1$ for
  $n>2$.  It is an \emph{essentially quadratic perturbation} of $f$ if
  $d_2=1$ and $d_n<n-1$ for $n>2$.  It is a \emph{sub-quadratic
    perturbation} of $f$ if $d_n<n-1$ for $n\geq 2$.
\end{definition}

If $f$ and $g$ are two analytic perturbations at $z_0$ and $w_0$,
respectively, with $f(z_0)=w_0$ and $g(w_0)=\zeta_0$, then their
composition $g\circ f$, defined as $(g\circ f)(a,z) = g(a,f(a,z))$ is
again an analytic perturbation, analytic in some polydisk centered at
$(0,z_0)$, possibly smaller than the domain of $f$. As a technical
convention, when we talk about functions being analytic in some not
necessarily open set (e.g., a closed polydisk), we mean that the
function is analytic in a neighborhood of the set.

The following proposition shows that the admissible perturbations
$f(a,z)$ are exactly those for which the (rescaled) perturbation limit
for $a \to \infty$ exists.
\begin{proposition}
  \label{prop:flipped-perturbations}
  Let $f(a,z)$ be an analytic perturbation family, analytic for
  $|a| \le r$, $|z-z_0| \le \eps$, and define
  \begin{equation}
    \label{eq:flipped-perturbation}
    F(b,w) = \frac{f(b^{-1}, z_0 + bw)-f(z_0)}b = f'(z_0)w +
    \sum_{n=2}^\infty {F_n(b) w^n}
  \end{equation}
  for $|b| \ge 1/r$, $|w| \le \eps / |b|$. Then $F$ extends
  analytically to a polydisk $|b| \le 1/r$, $|w| \le \delta$ iff $f$
  is an admissible perturbation. In this case, $F(b,w)$ is an
  admissible perturbation of the analytic function $F(w) = F(0,w) =
  f'(z_0) w + O(w^2)$. In particular, the coefficients $F_n(b)$ are
  polynomials in $b$. Furthermore, $F$ is a quadratic polynomial iff
  $f(a,z)$ is an essentially quadratic family, and it is a linear
  function or a constant iff $f(a,z)$ is a subquadratic family.
\end{proposition}
\begin{remark}
  We call the family $F(b,w)$ the \emph{flipped perturbation family}
  and the function $F(w)$ the \emph{perturbation limit} associated to
  $f(a,z)$. It is obvious that we can recover the original
  perturbation family $f(a,z)$ from the flipped family together with
  the points $z_0$ and $f(z_0)$.  Note that in the case of a fixed
  point $z_0 = f(z_0)$, the function $w \mapsto F(b,w)$ near $0$ is
  locally conjugate to $z \mapsto f(a,z)$ near $z_0$, where the
  perturbation parameters $a$ and $b$ are related by $ab=1$. The
  function $F(w)$ in this case is a rescaled limit of the perturbation
  $f(a,z)$ for $a \to \infty$.
\end{remark}
\begin{proof}
  From the definition of the flipped perturbation we know that $F_n(b)
  = b^{n-1}f_n(b^{-1})$. The permanence principle gives that any
  analytic continuation of $F$ to a polydisk $|b| \le 1/r$, $|w| \le
  \delta$ has to be given by the same series
  \eqref{eq:flipped-perturbation} with $F_n(b) = \frac{1}{n!}
  \frac{\partial^n F}{\partial w^n}(b,0)$, so it can only be possible
  if each $F_n$ extends analytically to $|b| \le 1/r$, and it $F_n(0)
  = \lim\limits_{b\to 0} b^{n-1}f_n(b^{-1})$ exists for every $n \ge
  2$. This implies that $f_n$ is a polynomial of degree $d_n < n$,
  i.e., that $f(a,z)$ is an admissible perturbation. It also implies
  that $F_n$ is a polynomial of degree $D_n<n$, so $F(b,w)$ is an
  admissible perturbation of $F(w) = F(0,w)$.

  On the other hand, if $f(a,z)$ is an admissible perturbation, then
  first of all $f$ is bounded on the closed polydisk $|a| \le r$,
  $|z-z_0| \le \eps$, so by Cauchy estimates there exists a constant
  $C$ with $|f_n(a)| \le C \eps^{-n}$ for all $|a| \le r$. This shows
  that $|F_n(b)| \le r^{-n+1} C \eps^{-n}$ for $|b| = 1/r$, and the
  maximum principle then gives the same inequality for all $|b| \le
  1/r$. From this we get that the series for $F(b,w)$ converges
  uniformly for $|b| \le 1/r$ and $|w| \le r\eps$, and hence defines
  an analytic function in $|b| < 1/r$, $|w| < r\eps$, providing the
  claimed analytic extension. The power series coefficients $P_n =
  \lim\limits_{b\to 0} b^{n-1} f_n(b^{-1})$ for the perturbation limit
  $P(w) = F(0,w) = \sum\limits_{n=1}^\infty P_n w^n$ are given by the
  $(n-1)$-st coefficients of $f_n(a)$. This shows that $P$ is a
  quadratic polynomial iff $d_2 = 1$ and $d_n < n-1$ for $n>2$, which
  is exactly the definition of an essentially quadratic perturbation
  family. It also shows that $P$ is linear or constant iff $d_n < n-1$
  for all $n \ge 2$, i.e., if $f(a,z)$ is a sub-quadratic perturbation
  family.
\end{proof}
\begin{proposition}
  \label{prop:composition} 
  Let $f$ and $g$ be admissible perturbations at $z_0$ and
  $w_0=f(z_0)$. Then the composition $h = f \circ g$ is also an
  admissible perturbation. If furthermore 
  $f'(z_0)g'(w_0) \ne 0$ and one of the families is
  essentially quadratic and the other one is sub-quadratic, then the
  composition $h$ is essentially quadratic.
\end{proposition}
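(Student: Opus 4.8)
The plan is to turn the statement into elementary bookkeeping of polynomial degrees in the composition series. Working in the variables $u=z-z_0$ and $v=w-w_0$ built into the two expansions, the hypothesis $f(z_0)=w_0$ says that $f(a,z)=w_0+\sum_{m\ge 1}f_m(a)\,u^m$ has no constant term in $v$, so
\[
(g\circ f)(a,z) = g(w_0) + \sum_{k\ge 1} g_k(a)\,\Bigl(\sum_{m\ge 1} f_m(a)\,u^m\Bigr)^{k}
\]
has a well-defined expansion $\sum_{n\ge 0}h_n(a)u^n$ with
\[
h_n(a) = \sum_{k=1}^{n} g_k(a) \sum_{\substack{m_1+\dots+m_k=n\\ m_i\ge 1}} f_{m_1}(a)\cdots f_{m_k}(a),
\]
a finite sum of products of the polynomial coefficients of $f$ and $g$, hence a polynomial in $a$. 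Since $h_0=g(w_0)$ and $h_1=g'(w_0)f'(z_0)$ are constants, $g\circ f$ is again a perturbation family at $z_0$, and it remains only to check the degree conditions $\deg h_2=1$ and $\deg h_n\le n-2$ for $n>2$.

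The key estimate is that every perturbation family in play satisfies $\deg f_m\le m-1$ for all $m\ge 1$ (with $\deg f_1=0$, and $\deg f_2\le 1$ by essential- or sub-quadraticity), and likewise for $g$; since $\deg$ is additive over products in $\C[a]$, each product $g_k(a)f_{m_1}(a)\cdots f_{m_k}(a)$ occurring in $h_n$ (so $m_1+\dots+m_k=n$) has degree at most $(k-1)+\sum_i(m_i-1)=n-1$, with equality only if $\deg g_k=k-1$ and $\deg f_{m_i}=m_i-1$ for every $i$. This gives $\deg h_n\le n-1$ for all $n$, and I would then rule out equality for $n>2$ by the two symmetric case distinctions. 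If $f$ is essentially quadratic and $g$ sub-quadratic, then $\deg g_k=k-1$ forces $k=1$ (sub-quadraticity gives $\deg g_k\le k-2$ for $k\ge 2$), whence $m_1=n$ and $\deg f_n=n-1$ forces $n\le 2$ (essential quadraticity gives $\deg f_n\le n-2$ for $n>2$); so for $n>2$ every product has degree $\le n-2$. The mirror-image argument handles the case $g$ essentially quadratic, $f$ sub-quadratic: now $\deg f_{m_i}=m_i-1$ forces every $m_i=1$, hence $k=n$, and $\deg g_n=n-1$ again forces $n\le 2$.

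For $n=2$ one needs the exact degree, and this is the only place where the hypothesis $f'(z_0)f'(w_0)\ne 0$ enters: $h_2(a)=g'(w_0)\,f_2(a)+g_2(a)\,f'(z_0)^2$, and in the first case $\deg f_2=1$ together with $g'(w_0)\ne 0$ makes the first term have degree exactly $1$ while $\deg\bigl(g_2(a)f'(z_0)^2\bigr)\le 0$ cannot cancel it, whereas in the second case $\deg g_2=1$ and $f'(z_0)\ne 0$ make the second term dominate; either way $\deg h_2=1$. I do not anticipate a genuine obstacle: apart from confirming that the composed series really has polynomial coefficients with constant linear and constant terms, the whole argument is the degree inequality $\deg\bigl(g_k\prod f_{m_i}\bigr)\le n-1$ plus the observation that its extremal configuration is incompatible with $n>2$ in each of the two cases, so the bulk of the write-up will just be organizing this case analysis cleanly.
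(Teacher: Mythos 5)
Your proposal is correct and follows essentially the same route as the paper: expand the composition, note the coefficients $h_n$ are polynomials with $h_0,h_1$ constant, bound $\deg\bigl(g_k f_{m_1}\cdots f_{m_k}\bigr)\le n-1$ by additivity of degrees, rule out equality for $n>2$ by the same two-case analysis, and check $h_2=g_1f_2+g_2f_1^2$ has degree exactly $1$ using $f'(z_0)g'(w_0)\ne 0$. The only difference is cosmetic (you analyze the extremal configuration from the $f$-side in the second case, the paper from the $g$-side), so there is nothing to add.
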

\begin{figure}
  \begin{tikzcd}
    D_r(z_0) \arrow[r,"f_a"] \arrow[d,"T_{a,z_0}"] &
    D_s(w_0) \arrow[r,"g_a"] \arrow[d,"T_{a,w_0}"] &
    D_t(g(w_0)) \arrow[d,"T_{a,g(w_0)}"]
    \\
    D_\rho(0) \arrow[r,"F_b"] &
    D_\sigma(0) \arrow[r,"G_b"] &
    D_\tau(0)
  \end{tikzcd}
  \caption{Relation between perturbations $f_a(z) = f(a,z)$ and
    $g_a(w) = g(a,w)$ and their flipped versions $F_b(\zeta) =
    F(b,\zeta)$ and $G_b(\omega) = G(b,\omega)$. The vertical maps are
    maps of the form $T_{a,z_0}(z) = a(z-z_0)$, and the perturbation
    parameters $a$ and $b$ are related by $ab=1$. It is clear from the
    diagram that the flipped perturbation of the composition is the
    composition of the flipped perturbations.}
  \label{fig:flipped-diagram}
\end{figure}

\begin{proof}
  The claim can be proved by a straightforward, though somewhat
  cumbersome calculation of the degrees of the coefficients in the
  composition of the power series. However, we will give a slightly
  more insightful proof here using the flipped perturbations.

  Let as before
  \begin{align*}
    F(b,\zeta) & = \frac{f(b^{-1},z_0 + b \zeta) - f(z_0)}{b} \quad    \text{ and} \\
    G(b,\omega) &= \frac{g(b^{-1}, w_0 + b \omega) - g(w_0)}{b}
  \end{align*}
  be the flipped perturbations associated to $f(a,z)$ and $g(a,w)$,
  and let $h(a,z) = (g \circ f)(a,z) = g(a,f(a,z))$ be their
  composition. Then the flipped perturbation $H(a,z)$ of the
  composition $g \circ f$ is the composition of the flipped
  perturbations $H = G \circ F$, as illustrated in the commutative
  diagram in \autoref{fig:flipped-diagram}.
  If $f$ and $g$ are both admissible, then $F$ and $G$ extend to a
  polydisk centered at $0$, and so does their composition $H$, which
  shows that $g \circ f$ is admissible, too.  If furthermore one of
  $f$ and $g$ is essentially quadratic and the other one is
  subquadratic, then one of their perturbation limits $F$ and $G$ is a
  quadratic polynomial, whereas the other one is linear or
  constant. However, with the additional assumption that
  $f'(z_0)g'(w_0) \ne 0$, neither $F$ nor $G$ can be constant since
  $F'(0) = f'(z_0)$ and $G'(0) = g'(w_0)$.  This shows that the
  perturbation limit of the composition $H = G \circ F$ is the
  composition of a linear function and a quadratic polynomial, so it
  is again a quadratic polynomial, which shows that $h$ is an
  essentially quadratic perturbation.
\end{proof}

\begin{definition}
  A family of maps $f(a,z) = \lambda z + \sum_{k=2}^\infty f_k(a) z^k$
  is \emph{uniformly linearizable} for $|a|\leq r$ if there exists
  $\eps>0$ and a family of conformal maps $h(a,z) = z + O(z^2)$ such
  that $f(a,h(a,z)) = h(a,\lambda z)$ for $|a|\leq r$ and $|z|<\eps$.
\end{definition}

\begin{remark}
  Uniform linearizability means that all maps $z \mapsto f_a(z)$ for
  $|a|<r$ have rotation domains whose size is uniformly bounded
  below. Here ``size'' can be interpreted either as the conformal
  radius or the in-radius of the domain.\footnote{The \emph{conformal
      radius} of a simply connected domain $S \subsetneq \C$ with
    respect to $0 \in S$ is defined as $r_1(S) = h'(0)$ where $h: \D
    \to S$ is the Riemann map with $h(0) = 0$, $h'(0) > 0$. The
    \emph{in-radius} of $S$ with respect to $0$ is $r_2(S) =
    \dist(0,\partial S)$. By the Schwarz lemma and Koebe's distortion
    theorems, $1 \le r_1(S)/r_2(S) \le 4$.}
\end{remark}

The next proposition is the main result of this section, and it
generalizes similar results by P{\'e}rez-Marco (see
\cite{PerezMarco1997} or \cite{Geyer1998}) and Yoccoz
\cite{Yoccoz1995}. P{\'e}rez-Marco used Hartogs' Theorem in his proof,
here we give a proof using the formal linearization following Yoccoz.
\begin{proposition}
  \label{prop:maxprinc}
  If an essentially quadratic family $f(a,z) = \lambda z +
  \sum\limits_{k=2}^\infty f_k(a) z^k$ is uniformly linearizable for $|a|\leq
  r$ then the quadratic polynomial $F(z) = \lambda z + z^2$ is
  linearizable.
\end{proposition}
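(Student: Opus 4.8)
The plan is to extract the linearization of $P(z)=\lambda z+z^2$ from the uniform linearizations of $f(a,z)$ by letting $a\to\infty$ in a suitably renormalized coordinate, using the formal power series and a normal-families / maximum-principle argument as in Yoccoz's treatment. First I would record the formal linearization: for each $a$, write the conjugacy as a formal power series $h(a,z)=z+\sum_{k\ge 2}h_k(a)z^k$ solving $f(a,h(a,z))=h(a,\lambda z)$. Matching coefficients gives a recursion $(\lambda^k-\lambda)h_k(a)=$ (polynomial in $\lambda$, the $f_j(a)$ for $j\le k$, and the $h_j(a)$ for $j<k$); since $\lambda$ is irrationally indifferent, $\lambda^k-\lambda\ne 0$ for all $k\ge 2$, so the $h_k(a)$ are uniquely determined rational functions of $a$, in fact polynomials because each $f_j$ is a polynomial and the denominators are constants in $a$. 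The essential-quadratic hypothesis ($\deg f_2=1$, $\deg f_k<k-1$ for $k>2$) is exactly what is needed to control the $a$-degree of $h_k$: an induction on $k$ shows $\deg_a h_k\le k-1$, with the leading behavior driven entirely by the $f_2$-term, which reproduces the recursion for the quadratic polynomial $\lambda w+w^2$.

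Next I would make the rescaling precise. Fix $k$ and compare $h_k(a)$ with the $k$-th linearization coefficient $H_k$ of $P(w)=\lambda w+w^2$. Writing $f_2(a)=\alpha a+\beta$ with $\alpha\ne 0$, I claim that $\lim_{a\to\infty} \alpha^{1-k} a^{1-k} h_k(a) = H_k$ — equivalently, that in the coordinate $w=\alpha a\,z$ the family $f(a,z)$ converges, coefficient by coefficient, to $P$ as $a\to\infty$. This follows from the degree bounds above: after the substitution, the coefficient of $z^k$ in $(\alpha a)^{k-1}f\big(a,\tfrac{w}{\alpha a}\big)$ is $(\alpha a)^{k-1}(\alpha a)^{-k}\cdot(\alpha a) = $ [the $f_2$ contribution, giving exactly $1$ for $k=2$] plus lower-order-in-$a$ terms from the $f_k$ with $k>2$ (whose degree deficit $<k-1$ kills them) and from the linear term (which only affects $k=1$). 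So $(\alpha a)^{k-1} f\big(a,\tfrac{\cdot}{\alpha a}\big) \to \lambda\,(\cdot)+(\cdot)^2$ in the topology of coefficientwise convergence, and since the linearization coefficients depend polynomially (hence continuously) on finitely many map-coefficients, $\alpha^{1-k}a^{1-k}h_k(a)\to H_k$.

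Finally I would upgrade this coefficientwise limit to genuine linearizability of $P$ using the uniform bound. Uniform linearizability on $|a|\le r$ gives a fixed $\eps>0$ and a uniform bound $|h(a,z)|\le M$ for $|z|<\eps$, $|a|\le r$; by the Schwarz-type estimates this yields $|h_k(a)|\le M\eps^{-k}$ for all $k$ and all $|a|\le r$. Now comes the key point: each $h_k$ is a \emph{polynomial} in $a$, so the bound $|h_k(a)|\le M\eps^{-k}$ on the circle $|a|=r$ — more robustly, the bound on the maximum of $|h_k|$ over that circle — combined with $\deg_a h_k\le k-1$ gives, via the elementary estimate $\max_{|a|=R}|p(a)|\le (R/r)^{\deg p}\max_{|a|=r}|p(a)|$ for polynomials, the bound $|h_k(a)|\le M\eps^{-k}(|a|/r)^{k-1}$ for all $a$. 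Dividing by $|\alpha a|^{k-1}$ and letting $a\to\infty$ shows $|H_k|\le M(\eps|\alpha| r)^{-1}\cdot(\eps|\alpha|)^{-(k-1)} = C\rho^{-k}$ for $\rho=\eps|\alpha|$ and a constant $C$ independent of $k$. Hence the formal linearizing series $H(w)=w+\sum H_k w^k$ for $P$ has positive radius of convergence, so $P(z)=\lambda z+z^2$ is linearizable.

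The main obstacle is the bookkeeping in the second paragraph: one must verify carefully that the essential-quadratic degree conditions propagate correctly through the Cauchy-product recursion for $h_k$, so that $\deg_a h_k\le k-1$ holds and the top-degree coefficient is precisely $H_k$ (with the normalization $\alpha^{k-1}$) and not polluted by the higher $f_j$. This is an induction where the inductive hypothesis has to be stated with the sharp degree bound and the sharp identification of the leading coefficient simultaneously; everything else (the Schwarz estimates, the polynomial max-modulus inequality, passing to the limit) is routine once that degree/leading-term lemma is in place.
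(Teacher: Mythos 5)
Your proposal is correct and is essentially the paper's own argument: the paper carries out your renormalization via the substitution $b=a^{-1}$, setting $F(b,z)=b^{-1}f(b^{-1},bz)=\lambda z+\sum_k g_k(b)z^k$ with $\deg g_k\le k-1$ and $g_k(0)=0$ for $k>2$, $g_2(0)\ne 0$, so that the quadratic polynomial sits at $b=0$; the linearizing coefficients $H_k(b)$ are polynomials in $b$, bounded on $|b|=1/r$ by de Branges/Koebe thanks to uniform linearizability, and the maximum principle gives the bound at $b=0$ — which is precisely your polynomial max-modulus growth estimate plus the $a\to\infty$ limit, read in the reciprocal coordinate. Your degree-$(k-1)$ bound and leading-coefficient identification for $h_k(a)$ is the same bookkeeping, and it does go through as you describe.
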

\begin{proof}
  The idea of the proof is that $f(a,z)$ for large $|a|$ is conjugate
  to the quadratic polynomial, for small $|a|$ it is uniformly
  linearizable, and there is a ``maximum principle'' for linearization
  (in terms of $b=a^{-1}$) which yields linearizability of the
  quadratic polynomial. In order to make sense of perturbations for
  large $a$, we pass to the flipped family
  \[
    F(b,w) = b^{-1} f(b^{-1}, bw) = \lambda w + \sum_{k=2}^\infty
    F_k(w) w^k
  \]
  and use \Cref{prop:flipped-perturbations} to see that it extends
  analytically to a polydisk $|b| \le 1/r$, $|w| \le \delta$, for some
  $\delta > 0$, with perturbation limit $F(0,w) = \lambda w + c w^2$
  with $c = f_2(0) \ne 0$. Using the linear conjugation $z \mapsto cz$
  for the original function $f$ and the perturbation family, we may
  assume that $c = 1$.

  If we try to linearize $F(b,w)$ by a formal power series $H(b,z) = z
  + \sum\limits_{n=2}^\infty H_n(b) z^n$, i.e., solving the equation
  \[
    F(b,H(b,z)) = H(b,\lambda z),
  \]
  we get recursive equations for the coefficients $H_n(b)$ of the form
  \[
    H_n = \frac{F_n + P_n(F_2, \ldots, F_{n-1}, H_2, \ldots,
      H_{n-1})}{\lambda^n - \lambda},
  \]
  where the $P_n$ are explicitly calculable polynomials. In
  particular, since the coefficients $F_n(b)$ are polynomials, we get
  by induction that $H_n(b)$ are polynomials in $b$, too. The series
  $H(b,z) = z+ \sum_{n=2}^\infty H_n(b) z^n$ is the unique normalized
  formal linearizing series for $F(b,w)$, and the function $F(b,w)$ is
  linearizable for a particular $b$ iff the series for $H(b,z)$ has a
  positive radius of convergence.
  
  For $|b| = 1/r$ we know that $H(b,z)$ actually converges in some
  disc $|z| < \delta$, because $F(b,\cdot)$ is linearly conjugate to
  $f(b^{-1},\cdot)$ by $z \mapsto bz$. Furthermore, for fixed $b$ with
  $|b|=1/r$, the map $z\mapsto H(b,z)$ is a normalized conformal map
  in $D_\delta(0)$, thus we get $|H_k(b)| \leq k \delta^{-k+1}$ by de
  Branges's Theorem.  (We do not really need this strong result, the
  classical estimates derived from Cauchy's formula and Koebe's
  distortion theorems would suffice here.)  The maximum principle then
  yields $|H_k(0)| \leq k \delta^{-k+1}$ which implies that $H(0,z)$
  converges for $|z|<\delta$. As $F(0,w) = \lambda w + w^2 = F(w)$, we
  have shown that $F(w)$ is linearizable.
\end{proof}

Combining this result with Yoccoz's result about the optimality of the
Brjuno condition for the quadratic family (Theorem \ref{thm:RBY}), we
immediately get the following result.

\begin{corollary}
  \label{cor:EssentialQuadraticLinearizableFamily}
  If an analytic linearizable germ $f(z) = \lambda z + O(z^2)$ with $\lambda =
  e^{2\pi i \alpha}$ admits an essentially quadratic uniformly
  linearizable perturbation, then $\alpha \in \cB$.
\end{corollary}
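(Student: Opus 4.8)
The plan is simply to feed the given perturbation into \autoref{prop:maxprinc} and then apply the sharpness half of the R\"ussmann--Brjuno--Yoccoz theorem stated in the introduction. Before doing so I would record one bookkeeping point: since $f(z) = \lambda z + O(z^2)$ has its fixed point at the origin, and every perturbation in the sense of this section leaves the constant and linear coefficients unchanged, any essentially quadratic perturbation of $f$ is automatically of the normalized form $f(a,z) = \lambda z + \sum_{k \ge 2} f_k(a) z^k$ which is exactly the input hypothesis of \autoref{prop:maxprinc}. So no translation or renormalization is needed, and the hypothesis that $f$ itself is linearizable is in fact already subsumed by the case $a=0$ of uniform linearizability.

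From there the argument is two steps. First, by assumption there is $r>0$ and an essentially quadratic perturbation family $f(a,z) = \lambda z + \sum_{k\ge 2} f_k(a) z^k$ of $f$ that is uniformly linearizable for $|a|\le r$; \autoref{prop:maxprinc} then yields that the quadratic polynomial $P(z) = \lambda z + z^2$ is linearizable at $0$. Second, I invoke the contrapositive of the second assertion of the R\"ussmann--Brjuno--Yoccoz theorem: Yoccoz proved that for $\alpha \in \R \setminus \cB$ the polynomial $P(z) = e^{2\pi i \alpha} z + z^2$ is \emph{not} linearizable. Since we have just shown $P$ is linearizable, we conclude $\alpha \in \cB$, which is the claim.

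I do not expect any real obstacle here: all of the analytic content (the ``maximum principle in $b = a^{-1}$'' together with the de Branges / Koebe coefficient bounds) has been carried out in the proof of \autoref{prop:maxprinc}, and the number-theoretic sharpness is exactly Yoccoz's theorem, which is quoted. The only point requiring a moment's attention is the normalization of the fixed point noted above, so that the family handed to us literally matches the form assumed in \autoref{prop:maxprinc}; everything else is a direct chaining of the two cited results.
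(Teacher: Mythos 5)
Your proposal is correct and is exactly the paper's argument: the corollary is obtained by chaining \autoref{prop:maxprinc} with the Yoccoz sharpness half of the R\"ussmann--Brjuno--Yoccoz theorem, and your normalization remark is a harmless bookkeeping observation. Nothing further is needed.
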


\section{Analytic Families of Polynomial-like Maps}
\label{sec:analyt-famil-polyn}
Polynomial-like maps and analytic families of polynomial-like maps were
introduced by Douady and Hubbard in \cite{DouadyHubbard1985}. In this
section we review some of the definitions and results we are going to
use, as well as prove a few small results of our own. We assume that
the reader is familiar with the basics of quasiconformal maps, see
e.g.\ \cite{LehtoVirtanen1973}. For a good overview of various
applications of quasiconformal maps in complex dynamics, see also
\cite{BrannerFagella2014}.

\subsection{Polynomial-like Maps}
\begin{definition}
  A \emph{polynomial-like map} of degree $d \ge 2$ is a triple
  $(f,U,V)$, where $U,V \subset \C$ are bounded simply connected domains
  with $\overline{U} \subset V$, and $f:U \to V$ is a proper analytic
  map of topological degree $d$. The \emph{filled-in Julia set} of
  $(f,U,V)$, denoted by $K(f,U,V)$, is the set of all $z \in U$
  such that $f^n(z) \in U$ for all $n \ge 1$. The \emph{Julia set} of
  $(f,U,V)$ is defined as $J(f,U,V) = \partial K(f,U,V)$.
\end{definition}
The definition is modeled on the dynamics of polynomials. In
particular, every polynomial $f$ of degree $d \ge 2$ is
polynomial-like of the same degree $d$, with $V = \D_r$ being a large
disk, and $U = f^{-1}(V)$ its preimage. The Julia set and filled-in
Julia set are the same in this case, no matter whether $f$ is viewed
as a polynomial, or $(f,U,V)$ as a polynomial-like map.

In the context of polynomial-like maps $(f,U,V)$, we treat $f(z)$ as
undefined whenever $z \notin U$. E.g., when talking about a periodic
point $f^q(z) = z$, it is understood that $f^k(z) \in U$ for $k = 0,
1, \ldots, q-1$, even if $f$ is the restriction of a map defined in a
larger domain. Similarly, the preimage $f^{-1}(W)$ is defined as the
set of all $z \in U$ such that $f(z) \in W$.

\begin{definition}
  Two polynomial-like maps $(f_1,U_1,V_1)$ and $(f_2,U_2,V_2)$ with
  filled-in Julia sets $K_1$ and $K_2$ are \emph{topologically
    conjugate} if there exists a homeomorphism $\phi$ from a
  neighborhood of $K_1$ onto a neighborhood of $K_2$ such that $\phi
  \circ f_1 = f_2 \circ \phi$ near $K_1$. The maps are
  \emph{(quasi-)conformally conjugate} if $\phi$ can be chosen to be
  (quasi-)conformal. They are \emph{hybrid conjugate} if $\phi$ can be
  chosen to be quasiconformal with $\bar{\partial} \phi = 0$ a.e.\ on
  $K_1$.
\end{definition}

Given a polynomial-like map $(f,U,V)$, and a simply-connected domain
$V' \subset V$, let $\gamma$ be a simple closed loop in its preimage
$U' = f^{-1}(V')$, bounding a Jordan domain $W$. By continuity and
properness of the map $f$ we know that
$\partial f(W) \subseteq f(\gamma)$, so $f(W)$ is bounded by a compact
subset of $V'$, which means that $f(W) \subseteq V'$, so that
$W \subseteq U'$. This shows that $\gamma$ is null-homotopic in $U'$,
and thus that every connected component of $U'$ is simply connected.
If $U_1$ is a connected component of $U'$, then the Riemann-Hurwitz
formula gives that the number $n_1$ of critical points of $f$ in $U_1$
and the degree $d_1$ of $f|_{U_1}$ are related by $n_1 = d_1-1$. This
shows that the number of connected components $m$ of $U'$, the total
number $n$ of critical points of $f$ in $U'$ and the degree $d$ of
$f|_{U'}$ are related by $m+n = d$.  If we assume that $V'$ contains
all the critical values of $f$, then $m=1$, i.e., $U'$ is connected
and simply connected.  In particular, if $V'$ is bounded by an
analytic curve in $V$ which is sufficiently close to $\partial V$,
then $U'=f^{-1}(V')$ is an analytic Jordan domain, and $(f,U',V')$ is
polynomial-like and conformally conjugate to $(f,U,V)$ (via the
identity map near $K_f$.)  This shows that up to conformal conjugacy
we can always assume that the domains $U$ and $V$ are analytic Jordan
domains, and that $f$ extends analytically to a neighborhood of
$\overline{U}$.

The most important general result about polynomial-like maps is the
following Straightening Theorem by Douady and Hubbard
\cite{DouadyHubbard1985}.
\begin{theorem}[Douady, Hubbard]
  \label{thm:Straightening}
  Every polynomial-like map $(f,U,V)$ is hybrid conjugate to a
  polynomial $P$. If the filled-in Julia set $K(f,U,V)$ is connected,
  then $P$ is unique up to affine conjugation.
\end{theorem}

This theorem implies that any result in complex dynamics invariant
under hybrid conjugacy is automatically valid for polynomial-like
maps, too. Many of these results can actually be proven directly
without resorting to the Straightening Theorem, by copying the proofs
for polynomials.

\subsection{Analytic Families of Polynomial-like Maps}
\begin{definition}
  Let $A$ be a complex manifold, $\cF = \{ (f_a, U_a, V_a) : a \in A
  \}$ be a family of polynomial-like maps, $\cU = \{ (a,z) : a \in A,
  \, z \in U_a \}$, $\cV = \{(a,z): a \in A, z \in V_a \}$. Then $\cF$
  is an \emph{analytic family of polynomial-like maps} if
  \begin{enumerate}
  \item \label{StandardTopology}
    $\cU$ and $\cV$ are homeomorphic over $A$ to $A \times \D$.
  \item \label{ProperParameters}
    The projection from the closure of $\cU$ in $\cV$ to $A$ is
    proper.
  \item \label{ProperMap}
    The mapping $F:\cU \to \cV$, $F(a, z) = (a,f_a(z))$ is
    complex-analytic and proper.
  \end{enumerate}
\end{definition}
Here \emph{homeomorphic over $A$} means that there exists a
homeomorphism of the form $\phi(a,z) = (a,\phi_a(z))$, and
\emph{proper} means that preimages of compact sets are compact. We
will always assume that $A$ is connected which implies that the degree
of the polynomial-like maps in the family is constant.

The next proposition is not explicitly stated in the paper of Douady
and Hubbard, but it is certainly known to the experts. Roughly
speaking it says that small analytic perturbations of polynomial-like
maps form an analytic family of polynomial-like maps.
\begin{proposition}
  \label{prop:PolynomialLikeStability}
  Let $(f,U,V)$ be polynomial-like of degree $d \ge 2$, and let
  $f_a(z) = f(a,z)$ be complex-analytic in $D_r(0) \times U$ for some
  $r>0$, with $f_0 = f$. Let $K$ be any compact set with $K(f,U,V)
  \subseteq K \subset U$. Then there exists $\rho>0$, a domain $V'$
  and a family of domains $U_a$ for $|a| < \rho$ with $K \subset U_a
  \subset \overline{U_a} \subset U \subset V' \subset V$ such that
  $\cF = \{ (f_a, U_a, V') : |a| < \rho \}$ is an analytic family of
  polynomial-like maps of degree $d$.
\end{proposition}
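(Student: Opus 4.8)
The plan is to produce the family of domains $U_a$ as preimages under $f_a$ of a single well-chosen target domain $V'$, and then to check the three axioms of an analytic family directly. First I would fix an intermediate domain: since $K(f,U,V) \subseteq K \subset U$ is compact and $\overline{U} \subset V$, I can choose a smooth (indeed analytic) Jordan curve $\gamma$ in $V \setminus \overline{U}$ enclosing $\overline{U}$; let $V'$ be the Jordan domain it bounds, so $\overline{U} \subset V' \Subset V$ and in particular $f$ is defined and analytic on a neighborhood of $\overline{U} \supset f^{-1}(\overline{V'})$. Because $f: U \to V$ is proper of degree $d$, the set $f^{-1}(\overline{V'})$ is a compact subset of $U$ on whose boundary $f$ takes values in $\partial V' = \gamma$; shrinking $\gamma$ if necessary (still enclosing $\overline{U}$... more precisely enclosing $K$ and contained in the region where $f$ is defined) I can also arrange that $\gamma$ avoids the finitely many critical values of $f$, so that $f^{-1}(\gamma)$ is a finite union of disjoint analytic curves, exactly one of which, call it $\gamma_0$, encloses $K$; let $U_0$ be the Jordan domain bounded by $\gamma_0$. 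Then $(f, U_0, V')$ is polynomial-like of degree $d$, conformally conjugate to $(f,U,V)$ near $K_f$, and $K \subset U_0$ after possibly shrinking $\gamma$ a bit more so that $\gamma_0$ still surrounds $K$ (this is where I use $K(f,U,V) \subseteq K$, so that no part of the "real" dynamics is cut off, and $K \subset U$ compact, so there is room).

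Next I would perturb. For $|a|$ small, $f_a$ is uniformly close to $f$ on the compact set $\overline{U}$ (by continuity of $f(a,z)$ on $D_r(0)\times U$ and compactness), hence on a neighborhood of $\gamma_0$; by the argument principle / Rouché, $f_a^{-1}(V')$ has, near $\gamma_0$, a single boundary component $\gamma_{0,a}$ which is a Jordan curve depending analytically on $a$ (it is a level set of the analytic function $(a,z) \mapsto$ "winding of $f_a(z)$ around a point of $\partial V'$", or more concretely an implicit-function-theorem deformation of the analytic curve $f^{-1}(\gamma) = \gamma_0$ since $f'$ is nonzero there as $\gamma$ misses the critical values). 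Let $U_a$ be the Jordan domain it bounds; choose $\rho \le r$ small enough that $K \subset U_a \Subset V'$ for all $|a| < \rho$, using again that $K$ is a fixed compact subset of the open set $U_0$. Since $f_a : U_a \to V'$ is proper and, by continuity of the degree under the perturbation, has topological degree $d$, each $(f_a, U_a, V')$ is polynomial-like of degree $d$.

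Finally I would verify the three conditions in the definition of an analytic family. For (\ref{ProperMap}): $f(a,z) = f_a(z)$ is analytic by hypothesis, and it is proper $\cU \to \cV$ because each fiber map is proper and the boundary $\partial U_a$ maps into $\partial V'$ with uniform control in $a$ on the relevant compact sets. For (\ref{ProperParameters}): the closure of $\cU$ in $\cV$ projects properly onto $D_\rho(0)$ because $\overline{U_a} \subset V'$ with $\overline{U_a}$ varying continuously and staying in the fixed compact set $\overline{V'}$. For (\ref{StandardTopology}): $\cV = D_\rho(0) \times V'$ is trivially homeomorphic over the parameter disk to $D_\rho(0)\times\D$ (Riemann map of $V'$), and for $\cU$ I would build the trivializing homeomorphism $\phi(a,z) = (a,\phi_a(z))$ by straightening the analytically-varying Jordan domains $U_a$ onto a fixed model — e.g. take $\phi_a$ to be a Riemann map $U_a \to \D$ normalized at a fixed interior point with positive derivative (which then depends analytically, hence continuously, on $a$), or more elementarily build an isotopy of $\overline{\C}$ carrying $U_0$ to $U_a$ supported near $\partial U_0$. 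The main obstacle I expect is precisely this last point, getting genuine continuity (over the parameter) of the trivialization rather than just fiberwise homeomorphisms; the cleanest route is the normalized Riemann map of $U_a$, whose analytic dependence on $a$ follows from the analytic dependence of $\partial U_a$ together with the Cauchy-integral formula for the uniformizing map, but one must be a little careful to choose the normalization point inside $\bigcap_{|a|<\rho} U_a$ (which is legitimate since $K$, or even a small disk around a point of $K$, lies in every $U_a$).
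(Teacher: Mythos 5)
There is a genuine gap, and it sits exactly at the choice of $V'$. You take $\gamma$ to be a curve in $V\setminus\overline{U}$ enclosing $\overline{U}$ and merely \emph{avoiding} the critical values of $f$; but what the argument actually needs is that $V'$ \emph{contains} $f(K)$ together with all critical values of $f$ (equivalently: first enlarge $K$ so that it contains all critical points of $f$ in $U$, then choose $\gamma$ separating $\partial V$ from $\overline{U}\cup f(K)$, which is what the paper does). Without this, two of your central claims can fail. First, the degree: if $f$ is, say, polynomial-like of degree $3$ with an escaping critical point $c$ whose critical value $f(c)$ lies between $\gamma$ and $\partial V$, then $f^{-1}(V')$ is disconnected (Riemann--Hurwitz), and the component whose boundary curve $\gamma_0$ surrounds $K(f,U,V)$ maps onto $V'$ with degree $2$, not $d=3$; your later appeal to ``continuity of the degree under the perturbation'' cannot repair this, since the degree is already wrong at $a=0$, and the proposition (and its use in \autoref{prop:perturb}, where one must keep \emph{all} critical orbits inside $U_a$) genuinely requires degree $d$. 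Second, the containment $K\subset U_0$: since $f(U_0)\subseteq V'$, any point of $K$ whose image lies outside $V'$ cannot lie in $U_0$, and such points can exist because $K$ is an arbitrary compact set with $K(f,U,V)\subseteq K\subset U$ and $f(K)$ need not be contained in $\overline{U}$. Your proposed fix of ``shrinking $\gamma$ a bit more so that $\gamma_0$ still surrounds $K$'' goes in the wrong direction: shrinking $\gamma$ only makes $V'$, hence $U_0$, smaller; what is needed is to push $\gamma$ \emph{outward} until it encloses $f(K)$ and the critical values (the parenthetical alternative of taking $\gamma$ inside the region where $f$ is defined, enclosing only $K$, is worse still, since then $\overline{U_0}\subset V'$ is no longer guaranteed).

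Once the target domain is chosen correctly, the rest of your plan is essentially the paper's proof: the boundary curve $\eta_0=\partial U_0$ carries no critical points, the implicit function theorem (or your Rouch\'e formulation) deforms it to analytic curves $\eta_a$ with $f_a(\eta_a(t))=\gamma(dt)$, the functional equation gives degree $d$ on $\partial U_a$, and the three axioms are verified as you indicate — in particular the trivialization of $\cU$ via Riemann maps of $U_a$ normalized at a common interior point, with continuity in $a$ obtained from Carath\'eodory kernel convergence, is exactly the paper's argument for property (1).
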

\begin{proof}
  We may assume that $K$ contains all the (finitely many) critical
  points of $f$ in $U$.  Let $\gamma$ be an analytic Jordan curve in
  $V$ which separates $\partial V$ from $\overline{U} \cup f(K)$, and
  let $V' \subset V$ be the domain bounded by $\gamma$. Then $U_0 =
  f^{-1}(V')$ is a connected and simply connected domain with analytic
  boundary, satisfying $K \subset U_0 \subset \overline{U_0} \subset U
  \subset V'$.  Let $\eta = \eta_0$ be the analytic Jordan curve
  bounding $U_0$. We may pull back the analytic parametrization of
  $\gamma: \R / \Z \to \C$ to obtain an analytic parametrization of
  $\eta_0: \R / \Z \to \C$ with $f(\eta_0(t)) = \gamma(dt)$ for $t\in
  \R / \Z$. By the complex implicit function theorem there exists
  $\rho>0$ and an analytic family of analytic Jordan curves $\eta_a$
  for $|a| < \rho$, satisfying $f_a(\eta_a(t)) = \gamma(dt)$ and
  $\eta_a(t) \in U \setminus K$ for all $t \in \R / \Z$. Let $U_a$ be
  the domain bounded by $\eta_a$. The functional equation implies that
  $f_a$ has degree $d$ on $\partial U_a$ for $|a| < \rho$ so $f:U_a
  \to V'$ is proper of degree $d$ with $K \subset U_a \subset
  \overline{U_a} \subset U \subset V'$, which means that $(f,U_a,V')$
  is polynomial-like of degree $d$. Given any $\eps > 0$, by possibly
  choosing $\rho>0$ smaller, we can make sure that all the curves
  $\eta_a(\R / \Z)$ for $|a| < \rho$ are contained in an
  $\eps$-neighborhood of $\partial U_0$, which implies that $U_a$ is
  contained in an $\eps$-neighborhood of $U_0$. Choosing $\eps =
  \frac12 \dist (\partial U_0, \partial U)$, this yields a compact
  neighborhood $K'$ of $U_0$ such that $U_a \subset K' \subset U$ for
  all $|a|<\rho$.
  
  We have to show that this family satisfies the three properties in
  the definition of analytic families of polynomial-like maps. Note
  that in our case $\cV = D_\rho(0) \times V'$ is a product, and $\cU =
  \{ (a,z): |a| < \rho, \, z \in U_a \} = \{ (a,z): |a|< \rho, \,
  f_a(z) \in V' \} = F^{-1}(\cV) \subseteq D_\rho(0) \times K'$.

  {\sc Property (\ref{ProperMap}).}  By assumption, the map $F(a,z) =
  (a,f(a,z))$ is complex-analytic in $D_r(0) \times U \supset \cU$.
  If $\cK \subset \cV$ is compact, then $\cK \subset K_1 \times K_2$
  with $K_1 \subseteq D_\rho(0)$ and $K_2 \subseteq V'$ compact. By
  continuity, $F^{-1}(\cK)$ is a relatively closed subset of the
  domain $D_\rho(0) \times U$, and additionally $F^{-1}(\cK)$ is
  contained in the compact set $K_1 \times K'$, which itself is
  contained in the domain of $F$, so $F^{-1}(\cK)$ is compact. This
  shows that $f$ is proper.

  {\sc Property (\ref{ProperParameters}).}  Let $\cU'$ be the closure
  of $\cU$ in $\cV$. Clearly, $\cU'$ contains the union of the
  fiberwise closures $\cU'' = \left\{ (a,z): |a|<\rho, \, z \in
    \overline{U_a} \right \}$, and we claim that these two sets $\cU'$
  and $\cU''$ are actually equal. In order to show the opposite
  inclusion, let $(a,z) \in \cU'$. Then $|a|<\rho$, $z \in V'$, and
  there exists $(a_n, z_n) \in \cU'$ with $(a_n, z_n) \to (a,z)$, so
  in particular $z_n \in U_{a_n}$ and $f_{a_n}(z_n) \in V'$. Passing
  to the limit, continuity implies $f_a(z) \in
  \overline{V'}$, and thus $z \in f_a^{-1}(\overline{V'}) =
  \overline{U_a}$, which shows that $(a,z) \in \cU''$.
  
  Now if $K \subset D_\rho(0)$ is compact, then the preimage in $\cU'$
  of $K$ under the projection is $\left\{ (a,z): a \in K , \, z \in
    \overline{U_a} \right\} = \left\{ (a,z): a \in K, \, f_a(z) \in
    \overline{V'} \right\} = F^{-1}(K \times \overline{V'}) \subset K
  \times K'$ which by the same argument as above is a relatively
  closed subset of a compact subset of the domain of $F$, so it is
  itself compact. This shows that the projection from the closure of
  $\cU$ in $\cV$ to $D_\rho(0)$ is proper.

  {\sc Property (\ref{StandardTopology}).}  By possibly choosing
  $\rho$ smaller, we may assume that there exists a point $z_0$ such
  that $z_0 \in U_a$ for all $|a| < \rho$.  Let $\phi_a: \D \to U_a$
  be the conformal map with $\phi(0) = z_0$ and $\phi'(0) > 0$. Since
  the boundaries of $U_a$ move analytically, Carath\'eodory's kernel
  convergence theorem shows that $a \mapsto \phi_a$ is continuous for
  $|a| < \rho$, with respect to the topology of locally uniform
  convergence of analytic functions on $\D$. This implies that
  $\phi(a,z) =(a, \phi_a(z))$ is a continuous bijective map from
  $D_\rho(0) \times \D$ to $\cU$, and that $\cU$ is an open subset of
  $\C^2$. Since both the domain and range are open subsets of $\C^2$,
  the map $\phi$ is a homeomorphism between them by Brouwer's
  invariance of domain. For the image domain $\cV = D_\rho(0) \times
  V'$ the corresponding argument is simpler. Let $\psi: \D \to V'$ be
  a conformal map. Then $\psi(a,z) = (a, \psi(z))$ is a homeomorphism
  from $D_\rho(0) \times \D$ onto $\cV$.

\end{proof}

\section{\texorpdfstring{$J$-stability}{J-stability}}
\label{sec:j-stability}
The concept of $J$-stability was introduced in
\cite{ManeSadSullivan1983} for families of rational functions. Here we
are using a version of this concept and the main results for analytic
families of polynomial-like mappings, as proved in
\cite{DouadyHubbard1985}. In order to simplify notation, we will work
with a fixed analytic family of polynomial-like maps $\cF = \{ (f_a,
U_a, V_a) : a \in A \}$, and write $J_a$, $K_a$ for the Julia set and
filled-in Julia set of $(f_a, U_a, V_a)$, resp.

\begin{definition}
  An indifferent periodic point $z_0$ of $f_{a_0}$ with minimal period
  $n$ is called \emph{persistent} if there exist neighborhoods $B$ and
  $W$ of $a_0$ and $z_0$, respectively, such that for all $a \in B$,
  the map $f_a$ has exactly one periodic point $z(a)$ of minimal
  period $n$, and such that $|(f_a^n)'(z(a))| = 1$ for $a \in B$.  Let
  $S = S(\cF) \subseteq A$ be the interior of the set of parameters
  $a \in A$ for which all indifferent periodic points of $f_a$ are
  persistent. We call $S$ the set of \emph{$J$-stable parameters} in
  the family $\cF$, and we say that a map $f_a$ is \emph{$J$-stable
    (in the family $\cF$)} if $a \in S$.
\end{definition}
\begin{remark}
  Note that in particular any open set subset of $A$ for which the
  corresponding maps $f_a$ have no indifferent periodic points at all
  is a subset of the set of $J$-stable parameters.
\end{remark}
The main result and justification for the name ``$J$-stable'' is the
following adaptation of Ma\~n\'e, Sad, and Sullivan's result
\cite{DouadyHubbard1985}*{II.4, Proposition 10}.
\begin{proposition}[Douady, Hubbard] \label{prop:JStability}
  The set $S$ is open and dense in $A$. Furthermore, for any $a_0 \in
  S$ there exists $K \ge 1$, a neighborhood $B$ of $a_0$ in $S$, a
  neighborhood $W$ of $J_{a_0}$, and a continuous embedding $\phi: B
  \times W \to \cV$ of the form $\phi(a,z) = (a, \phi_a(z))$ such that
  \begin{enumerate}
  \item $a \mapsto \phi_a(z)$ is holomorphic for every $z \in W$.
  \item $z \mapsto \phi_a(z)$ extends to a $K$-quasiconformal map of
    the plane for every $a \in B$.
  \item The image of $\phi$ is a neighborhood of $\cJ_B = \{ (a,z):
    a\in B, z \in J_a \}$ which is closed in $\cV \cap (B \times
    \C)$.
  \item $\phi_{a_0}(z) = z$ for all $z\in W$.
  \item \label{Conjugacy} $\phi_a(J_{a_0}) = J_{a}$, and $f_a \circ
    \phi_a = \phi_a \circ f_{a_0}$ on $J_{a_0}$, for all $a \in B$.
  \end{enumerate}
\end{proposition}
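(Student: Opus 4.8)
The plan is to adapt the classical Mañé–Sad–Sullivan argument on holomorphic motions, working locally over a small polydisk parameter so that we can use the holomorphic implicit function theorem rather than any global structure of $A$. First I would establish density of $S$: at a parameter $a_0 \notin \overline{S}$ one can, after an arbitrarily small perturbation, either make some indifferent cycle attracting or repelling, or destroy its persistence, but a standard argument shows that the set where all multipliers of low-period cycles stay on or inside the unit circle is relatively closed with empty interior only where genuine bifurcations happen; combined with the fact that the number of cycles of each period is locally bounded, one gets that the bifurcation locus is a countable union of proper analytic subvarieties, whose complement is open and dense, and on this complement all indifferent cycles persist. Openness of $S$ is immediate from its definition as an interior.

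For the holomorphic motion statement, fix $a_0 \in S$. The set $J_{a_0}$ is the closure of the repelling periodic points of $f_{a_0}$ (this holds for polynomial-like maps just as for polynomials, via the Straightening Theorem, \autoref{thm:Straightening}). Each repelling periodic point $p$ of period $q$ moves holomorphically with $a$ near $a_0$ by the implicit function theorem applied to $f_a^q(z) = z$, giving a holomorphic map $a \mapsto p(a)$; because $a_0 \in S$, no cycle can change its character to attracting or superattracting, so these continuations stay repelling, hence remain in $J_a$. One then checks, exactly as in Mañé–Sad–Sullivan, that this assignment is injective: two distinct repelling periodic points cannot collide without creating an indifferent (parabolic) point, contradicting $a_0 \in S$ and the bounded-cycle-count argument. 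This defines a holomorphic motion of the dense subset of repelling periodic points of $J_{a_0}$ over a small polydisk $B$.

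Next I would invoke the $\lambda$-lemma (Mañé–Sad–Sullivan, or Słodkowski's extended version) to extend this to a holomorphic motion $\Phi$ of $\overline{J_{a_0}} = J_{a_0}$, and then of a whole neighborhood $W$ of $J_{a_0}$, continuous in $(a,z)$, with each $\phi_a = \Phi(a,\cdot)$ quasiconformal with dilatation bounded by $K = K(B)$ which tends to $1$ as $B$ shrinks; normalizing so that $\phi_{a_0} = \mathrm{id}$ (and extending by the identity on the rest of $V$ if needed to get (4)) handles items (1), (2), (4), and the closed-neighborhood-of-$\cJ_B$ property (3) follows from properness of the family (\autoref{prop:PolynomialLikeStability}-type arguments) after possibly shrinking $B$. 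The conjugacy equation (5) on the dense set of periodic points is automatic from the construction, and it propagates to all of $W$ by continuity of both sides and density, which is the only mildly delicate point — one must make sure $W$ is chosen forward-invariant enough (replace $W$ by a smaller neighborhood on which $f_{a_0}(W) \subset \Phi$-domain) so that $f_a \circ \phi_a$ and $\phi_a \circ f_{a_0}$ are both defined and agree.

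The main obstacle I anticipate is the injectivity of the holomorphic motion on periodic points together with the persistence claim: ruling out that a repelling cycle becomes indifferent or that two cycles merge requires the structural input that near $a_0 \in S$ the multiplier maps $a \mapsto \lambda(Z_a)$ are holomorphic and, on the open set $S$, never hit the unit circle except at persistently indifferent cycles — so one has to be a little careful to separate the persistently indifferent cycles (which move holomorphically and stay indifferent, and lie in $K_a$ but by hypothesis not on $J_a$ since $a_0$'s indifferent Julia points would obstruct stability) from the repelling ones, and confirm that only the latter need to be moved to reconstruct $J_a$. Once that dichotomy is in hand, the rest is the standard quasiconformal-motion machinery transplanted verbatim from the rational-map setting to the polynomial-like setting.
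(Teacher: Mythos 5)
The paper does not actually prove this proposition: it is quoted verbatim (as an adaptation of Ma\~n\'e--Sad--Sullivan) from Douady--Hubbard, so your reconstruction is not paralleling any argument in the text; judged on its own terms it has the right skeleton (holomorphic motion of repelling cycles, $\lambda$-lemma, conjugacy by density and continuity), but it contains genuine gaps. The most serious one is the density of $S$. Your claim that ``the bifurcation locus is a countable union of proper analytic subvarieties'' is false -- already for the quadratic family the complement of $S$ is the boundary of the Mandelbrot set, which is nothing of the sort. The correct weaker statement, namely that parameters carrying a non-persistent indifferent cycle lie in a countable union of nowhere dense (real-analytic) hypersurfaces, only yields that the set of parameters at which all indifferent cycles persist is dense; it does \emph{not} yield that its interior $S$ is dense, which is what is asserted (a countable union of hypersurfaces can have dense complement with empty interior, as the union of all hyperplanes with rational data shows). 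The standard repair -- and the actual argument in Ma\~n\'e--Sad--Sullivan and Douady--Hubbard -- is to consider the number $N(a)$ of attracting cycles of $f_a$: it is bounded (by $d-1$, one critical orbit per attracting cycle) and lower semicontinuous, hence locally maximal on an open dense set, and at a point of local maximality every indifferent cycle must be persistent, since a non-persistent one could be perturbed into an extra attracting cycle. Some argument of this kind is indispensable and is missing from your sketch.

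A second error is your parenthetical claim that persistent indifferent cycles ``lie in $K_a$ but by hypothesis not on $J_a$, since $a_0$'s indifferent Julia points would obstruct stability.'' In the polynomial-like setting this is wrong and contradicts the way the paper uses the proposition: \autoref{cor:JStableUniformlyLin} explicitly treats persistent Cremer points, which lie in $J_a$, at $J$-stable parameters; in such families the multiplier of an indifferent cycle can be constant in $a$ (this is exactly the situation engineered in \autoref{prop:perturb}), so indifferent cycles on the Julia set are fully compatible with $a_0 \in S$. Fortunately this misconception is not load-bearing, since the motion is built on the closure of the repelling periodic points, which is all of $J_{a_0}$ (Cremer points included), and the conjugacy carries them into $J_a$. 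Finally, be careful with the extension step: S{\l}odkowski's theorem concerns motions parametrized by the unit disk, whereas here $A$ is an arbitrary complex manifold; over a higher-dimensional $B$ one cannot in general extend a holomorphic motion beyond the closure of the moved set, so obtaining the motion of a whole neighborhood $W$ of $J_{a_0}$ (item (3) of the statement) requires either restricting to one-dimensional parameter slices (which suffices for this paper's application, where $A = D_\rho(0)$) or a different extension argument such as the one in Douady--Hubbard.
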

\begin{remark}
  As was pointed out by the referee, the definition of persistent
  indifferent periodic points in Douady and Hubbard
  \cite{DouadyHubbard1985} is slightly different and not equivalent to
  the definition in Ma\~n\'e, Sad, and Sullivan
  \cite{ManeSadSullivan1983}, and the conjugacy relation
  (\ref{Conjugacy}) is stated in a different form in
  \cite{DouadyHubbard1985}. See \Cref{app:stable-persistent} for
  details and discussion.
\end{remark}
We are mostly interested in the following corollary on the persistence
of Siegel disks and Cremer points on the $J$-stable set.
\begin{corollary}
  \label{cor:JStableUniformlyLin}
  Let $a_0 \in S$, and assume that $z_{a_0} \in U_0$ is an
  irrationally indifferent periodic point of $f_{a_0}$ of period $q
  \ge 1$ and multiplier $\lambda = (f_{a_0}^q)'(z_{a_0})$. Then there
  exists a neighborhood $B$ of $a_0$ in $S$ and an analytic map
  $a\mapsto z_a$ in $B$ such that $z_a$ is an irrationally indifferent
  periodic point of $f_a$ of period $q$ and multiplier
  $\lambda$. Furthermore, if $z_{a_0}$ is a Cremer point of $f_{a_0}$,
  then $z_a$ is a Cremer point of $f_a$ for $a \in B$, and if
  $z_{a_0}$ is a Siegel point for $f_{a_0}$, then the family of maps
  $g_a(z) = f_a^q(z+z_a) - z_a$ is uniformly linearizable for $a \in
  B$.
\end{corollary}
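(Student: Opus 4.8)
The plan is to first use the implicit function theorem together with persistence to obtain a holomorphically varying periodic point of constant multiplier, and then to transport the local dynamics near $z_{a_0}$ to nearby parameters, using the conjugacies of \autoref{prop:JStability} in the Cremer case and an extension of the associated holomorphic motion in the Siegel case. For the first part, note that $\lambda = e^{2\pi i\alpha} \ne 1$, so $(a,z) \mapsto f_a^q(z) - z$ has a simple zero at $(a_0,z_{a_0})$; the implicit function theorem then gives a unique analytic branch $a \mapsto z_a$ near $a_0$ with $f_a^q(z_a)=z_a$ and $z_{a_0}$ as prescribed, and $\mu(a) := (f_a^q)'(z_a)$ is analytic with $\mu(a_0)=\lambda$. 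Shrinking to a connected parameter neighbourhood, I may assume $z_a$ is the only solution of $f_a^q(w)=w$ in some fixed neighbourhood $W_0$ of $z_{a_0}$. Since $a_0 \in S$, the point $z_{a_0}$ is persistent, so for $a$ in a neighbourhood $B$ of $a_0$ the map $f_a$ has an indifferent periodic point of period $q$ in $W_0$, which by uniqueness equals $z_a$; hence $|\mu| \equiv 1$ on $B$, and the open mapping theorem forces $\mu \equiv \lambda$. This yields the claimed family of irrationally indifferent periodic points of period $q$ and multiplier $\lambda$.

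In the Cremer case $z_{a_0} \in J_{a_0}$, and I would simply observe that by \autoref{prop:JStability} the point $\phi_a(z_{a_0})$ is a period-$q$ point of $f_a$ lying in $J_a$ (a conjugacy near the Julia set preserves the Julia set and the period) which tends to $z_{a_0}$ as $a\to a_0$ because $\phi_{a_0}=\mathrm{id}$; by the uniqueness above it equals $z_a$, so $z_a \in J_a$ is a Cremer point.

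The Siegel case is the crux, and the hard part is that the conjugacies $\phi_a$ of \autoref{prop:JStability} are defined only near $J_{a_0}$, so they cannot be used directly to carry the linearisation through the Siegel disk $\Delta$ of $f_{a_0}^q$ centred at $z_{a_0}$. My plan is to shrink $B$ to a disk and extend the holomorphic motion $a \mapsto \phi_a|_{J_{a_0}}$ to a holomorphic motion $\Phi_a$ of $\overline{\C}$ (by S{\l}odkowski's extension theorem for holomorphic motions): each $\Phi_a$ is then a homeomorphism of $\overline{\C}$ with $\Phi_{a_0} = \mathrm{id}$, $\Phi_a \to \mathrm{id}$ as $a \to a_0$, and $\Phi_a = \phi_a$ on $J_{a_0}$, so $\Phi_a$ still conjugates $f_{a_0}^q$ to $f_a^q$ there. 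Since $\partial\Delta \subset J_{a_0}$ and $\Delta$ is a simply connected component of $\overline{\C}\setminus J_{a_0}$ (it is both open and relatively closed in $\overline{\C}\setminus J_{a_0}$ because its boundary is in $J_{a_0}$), the image $\Delta_a := \Phi_a(\Delta)$ is a bounded simply connected component of $\overline{\C}\setminus J_a$, i.e.\ a Fatou component of $f_a$, whose boundary $\phi_a(\partial\Delta)$ is $f_a^q$-invariant. On a one-sided collar of $\partial\Delta_a$ inside $\Delta_a$, which still lies in the region where $\phi_a$ conjugates $f_{a_0}^q$ to $f_a^q$, the map $f_a^q$ is conjugate to a restriction of the rotation $f_{a_0}^q|_\Delta$; a short argument then shows that $f_a^q$ restricts to a conformal automorphism of $\Delta_a$ with no boundary fixed point, hence elliptic, and that its centre is a solution of $f_a^q(w)=w$ close to $z_{a_0}$, hence equals $z_a$. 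Thus $\Delta_a$ is the Siegel disk of $f_a^q$ at $z_a$.

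To conclude uniform linearizability, I would fix $r_0>0$ with $\overline{D_{r_0}(z_{a_0})} \subset \Delta$; since $\Phi_a \to \mathrm{id}$ uniformly on $\overline{D_{r_0}(z_{a_0})}$ and $z_a \to z_{a_0}$, after shrinking $B$ once more the topological disk $\Phi_a(D_{r_0}(z_{a_0})) \subseteq \Delta_a$ contains a round disk $D_\eps(z_a)$ with $\eps > 0$ independent of $a \in B$. Hence the Siegel disk of $f_a^q$ at $z_a$ has conformal radius at least $\eps$ for all $a\in B$, so the linearising coordinate of $g_a(z) = f_a^q(z+z_a)-z_a$ is defined on $D_\eps(0)$ for every $a\in B$, which is exactly uniform linearizability of $\{g_a\}$ on $B$. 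The genuinely delicate point is the identification in the previous paragraph of the non-dynamical image $\Delta_a$ with the actual Siegel disk of $f_a^q$ at $z_a$, which succeeds precisely because $\partial\Delta$ is moved by the part of $\Phi$ that still conjugates the two maps.
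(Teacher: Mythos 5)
Your first paragraph (implicit function theorem, local uniqueness, persistence forcing $|\mu|\equiv 1$, open mapping theorem forcing $\mu\equiv\lambda$) and your Cremer argument are essentially the paper's proof. The divergence, and the gap, is in the Siegel case. The step you yourself flag as delicate --- that $\Delta_a=\Phi_a(\Delta)$ is the Siegel disk of $f_a^q$ at $z_a$ --- is not actually established by the sketch you give. The extended motion $\Phi_a$ agrees with the dynamical conjugacy $\phi_a$ only on $J_{a_0}$, so a one-sided collar of $\partial\Delta_a$ inside $\Delta_a$ is \emph{not} known to be the $\phi_a$-image of a collar of $\partial\Delta$ inside $\Delta$: the set $\phi_a(W\cap\Delta)$ is disjoint from $J_a$, but nothing you have said places it in the component $\Delta_a$ rather than in other Fatou components accumulating on $\phi_a(\partial\Delta)$. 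Moreover, before any rotation-near-the-boundary argument can start you would need to know that $\Delta_a$ is a \emph{bounded} complementary component of $J_a$, i.e.\ a component of the interior of $K_a$ --- otherwise $f_a^q$ need not even be defined on all of $\Delta_a$ --- and that $f_a^q(\Delta_a)\subseteq\Delta_a$; forward invariance of the boundary $\phi_a(\partial\Delta)$ alone does not give either of these, since $\Phi_a$ carries $\infty$ and the basin of infinity nowhere in particular. There is also a smaller structural issue: S{\l}odkowski's theorem extends holomorphic motions over a disk, while the parameter space $A$ here is an arbitrary complex manifold, so ``shrink $B$ to a disk'' needs a one-dimensional slice or a restriction to the case actually used later. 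All of these points are plausibly repairable, but as written the crucial identification is asserted, not proved.

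The repair is to drop the transported disk altogether; this is what the paper does, and it is much shorter. In the Siegel case $z_{a_0}\notin J_{a_0}$, so $r=\dist(z_{a_0},J_{a_0})>0$. By property (5) of \autoref{prop:JStability} one has $J_a=\phi_a(J_{a_0})$, so $a\mapsto J_a$ is continuous in the Hausdorff metric on $B$, and after shrinking $B$ one gets $\dist(z_a,J_a)\ge r/2$ together with $z_a\to z_{a_0}$. Then $D_{r/2}(z_a)$ is a connected set disjoint from $J_a$ containing the periodic point $z_a\in K_a$, hence it lies in the Fatou component of $z_a$, which (since $z_a$ is irrationally indifferent and off the Julia set) is the Siegel disk of $f_a^q$ at $z_a$. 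Thus the in-radius, and a fortiori the conformal radius, of the linearization domain of $g_a(z)=f_a^q(z+z_a)-z_a$ is bounded below by $r/2$ uniformly on $B$, which is exactly uniform linearizability; your final paragraph's passage from a uniform round disk to a uniform domain for the linearizing coordinate is then fine. In short: your argument is correct up to and including the Cremer case, but the Siegel case should be rewritten along these lines (or the missing invariance and location statements for $\Delta_a$ supplied in full).
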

\begin{proof}
  By the implicit function theorem, there is a connected neighborhood
  $B$ of $a_0$ in $S$, a neighborhood $W$ of $z_{a_0}$ in $\C$, and an
  analytic map $a \mapsto z_a$ such that $f_a^q(z_a) = z_a$, and such
  that $f_a$ does not have any other $q$-periodic point in $W$. The
  multiplier $\lambda_a = (f_a^q)'(z_a)$ is an analytic function of
  $a$. By definition all indifferent periodic points of $f_a$ in $U_a$
  are persistent throughout the $J$-stable parameter set $S$, so
  $|\lambda_a| = 1$ for all $a \in B$. By analyticity this implies
  that $\lambda_a = \lambda_{a_0} = \lambda$ for all $a \in B$.

  If $z_{a_0}$ is non-linearizable for $f_{a_0}$, then
  $z_{a_0} \in J_{a_0}$, so $z_a = \phi_a(z_{a_0}) \in J_a$ is
  non-linearizable for $f_a$. If $z_{a_0}$ is linearizable for
  $f_{a_0}$, then $z_{a_0} \notin J_{a_0}$, so
  $r = \dist (z_{a_0}, J_{a_0}) > 0$.  By (\ref{Conjugacy}) of
  \autoref{prop:JStability} we know $J_a = \phi_a(J_{a_0})$, and this
  immediately implies that $a \mapsto J_a$ is continuous with respect
  to the Hausdorff metric on the set $S$. By possibly choosing $B$
  smaller, we can make sure that $\dist(z_a, J_a) \ge r/2 > 0$, so
  $z_a \notin J_a$. This implies that $z_a$ is linearizable for $f_a$,
  and that the Siegel disk centered at $z_a$ has in-radius $\ge r/2$,
  so that its conformal radius is also $\ge r/2$. (In fact, this
  argument shows that both the set of linearizable and the set of
  non-linearizable parameters are open subsets of $S$, so
  linearizability or non-linearizability persists across the connected
  component of $S$ containing $a_0$. Uniform linearizability will at
  least hold on compact subsets of stable components.)
\end{proof}

\section{Fatou-Shishikura Inequalities}
\label{sec:fatou-shish-ineq}
Saturated polynomials are those for which the Fatou-Shishikura
inequality on the number of non-repelling cycles is an equality. Using
the standard Fatou-Shishikura inequality that a polynomial of degree
$d \ge 2$ has at most $d-1$ non-repelling cycles, this would be
equivalent to having exactly $d-1$ non-repelling cycles. In order to
get a stronger result, we are proving a stronger version of the
Fatou-Shishikura inequality, taking into account critical relations.

Using and refining Goldberg and Milnor's fixed point portraits from
\cite{GoldbergMilnor1993}, Kiwi proved the following result in
\cite{Kiwi2000}*{Corollary 3.4}.
\begin{theorem}[Kiwi]
  \label{thm:KiwiSeparation}
  Let $P$ be a polynomial of degree $d \ge 2$ with connected Julia set
  $J(P)$. Then
  \begin{enumerate}
  \item \label{SeparationCremer}
    Given a Cremer cycle $Z$, there exists a critical point $c \in
    J(P)$ such that $Z \subseteq \omega_P(c)$ and such that
    $\omega_P(c)$ contains neither any other Cremer point nor any
    non-preperiodic boundary point of a Siegel disk.
    \item \label{SeparationSiegel}
      Given a cycle of Siegel disks $S$, and a point $z \in \partial
      S$, there exists a critical point $c \in J(P)$ such that $z \in
      \omega_P(c)$ and such that $\omega_P(c)$ contains neither any
      other Cremer point nor any non-preperiodic boundary point of a
      Siegel disk.
  \end{enumerate}
\end{theorem}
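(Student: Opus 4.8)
The plan is to route the whole argument through the combinatorics of external rays, available because $J(P)$ is connected, and to split the statement into a soft analytic ingredient — that the given cycle, resp.\ boundary point, lies in the $\omega$-limit set of a recurrent critical point contained in $J(P)$ — and a hard combinatorial ingredient — that this critical point can be taken so that its $\omega$-limit set accumulates on no other non-repelling cycle.

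For the soft ingredient I would invoke Ma\~n\'e's theorem: a forward-invariant compact set $X \subseteq J(P)$ containing no critical point, no parabolic periodic point, and disjoint from $\omega_P(c)$ for every recurrent critical point $c$ is uniformly expanding. Applied with $X = Z$, the cycle of a Cremer periodic point — finite, forward invariant, parabolic-free, and not expanding since its multiplier has modulus one — this forces either a critical point inside $Z$ (which is then a periodic, hence recurrent, critical point with $\omega_P(c) = Z$, and the statement is immediate) or a recurrent critical point $c$ with $Z \cap \omega_P(c) \neq \emptyset$, so that $Z \subseteq \omega_P(c)$ by forward invariance; moreover $c \in J(P)$, since a recurrent critical point lying in a Fatou component would have its $\omega$-limit set inside an attracting or parabolic cycle or a single Siegel disk (a polynomial has no Herman rings), none of which contains $Z$. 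For a non-preperiodic boundary point $z$ of a Siegel cycle $S$ the same theorem applies at $z$: it is not parabolic-preperiodic, and the shrinking conclusion of Ma\~n\'e's theorem fails there because a small neighbourhood of $z$ meets $S$ and, $P|_S$ being conjugate to a rotation, its iterated preimages inside $S$ do not shrink; hence $z \in \omega_P(c)$ for some recurrent $c \in J(P)$. (P\'erez-Marco's hedgehog theory is an alternative route to this ingredient.)

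The hard ingredient is the separation clause. Periodic external rays have rational angles and land at repelling or parabolic periodic points (Douady--Hubbard), every periodic point accessible from the basin of infinity is the landing point of such a ray, and one constructs, after Goldberg--Milnor~\cite{GoldbergMilnor1993} and Kiwi~\cite{Kiwi2000}, a \emph{fiber} decomposition of a neighbourhood of $J(P)$: the rays landing on a fixed repelling periodic orbit, cut off by a high equipotential, partition that neighbourhood into finitely many closed pieces, and the fiber $X(z)$ is the intersection, over all repelling periodic orbits, of the closed pieces containing $z$. Fibers are compact, connected and full, pairwise disjoint or equal, and compatible with $P$; a Gr\"otzsch-type modulus estimate shows that if none of $X(z), X(P(z)), X(P^{2}(z)), \dots$ contains a critical point then $X(z) = \{z\}$, which for a periodic point forces it to be a landing point of a periodic ray and hence repelling or parabolic — so for a Cremer cycle at least one of the finitely many orbit-fibers contains a critical point. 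Kiwi's analysis then shows, by a counting argument of Fatou--Shishikura type against the bound of $d-1$ critical points, that the $\omega$-limit set of a recurrent critical point contained in $J(P)$ can accumulate on the boundary datum of at most one non-repelling cycle; since the critical point furnished by the soft ingredient already has $Z \subseteq \omega_P(c)$ (resp.\ $z \in \omega_P(c)$), that single cycle is the one in hand, which is exactly the separation clause. I expect this combinatorial step to be the main obstacle; as it is precisely \cite[Corollary 3.4]{Kiwi2000}, in the present paper one may simply cite it, applying a standard renormalization beforehand to reduce a periodic Cremer or Siegel cycle to the fixed-point case whenever convenient.
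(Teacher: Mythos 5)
This statement is not proved in the paper at all: it is quoted verbatim, with attribution, as \cite[Corollary 3.4]{Kiwi2000}, so the paper's ``proof'' is simply the citation. Your proposal ends in the same place --- you concede that the separation clause is precisely Kiwi's corollary and may be cited --- so in substance you take the same approach as the paper; note only that your surrounding sketch does not replace the citation (the Ma\~n\'e-theorem step is subsumed in Kiwi's statement anyway, and your one-line description of the hard combinatorial step as ``a counting argument of Fatou--Shishikura type against the bound of $d-1$ critical points'' is not an accurate or sufficient account of Kiwi's separation argument), so the citation is what carries the weight, exactly as in the paper.
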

Note that there are only countably many preperiodic points, so there
always exist non-preperiodic boundary points of Siegel disks. Given an
irrationally indifferent cycle $Z$ of $P$, we will call any critical
point satisfying (\ref{SeparationCremer}) in the Cremer point case or
(\ref{SeparationSiegel}) in the case of a Siegel disk $S$, for any
non-preperiodic $z \in \partial S$, \emph{associated} to the cycle
$Z$. Note that associated critical points for different cycles have
disjoint infinite orbits in the Julia set.

In the following, $P$ is a fixed polynomial of degree $d \ge 2$, not
necessarily with connected Julia set. Given two points
$z_1, z_2 \in \C$, note that their forward orbits $O^+_P(z_1)$ and
$O^+(z_2)$ are either disjoint, or otherwise there exist $n,m \ge 0$
with $P^{m+k}(z_1) = P^{n+k}(z_2)$ for all $k \ge 0$. In the latter
case we say that $z_1$ and $z_2$ are \emph{(forward-orbit) equivalent}
and that they have the same \emph{orbit tail}. It is easy to see that
this is an equivalence relation on $\C$, and that equivalent points
$z_1$ and $z_2$ either both have finite or both have infinite
orbits. Furthermore, by complete invariance of the Julia set,
filled-in Julia set, and basin of infinity, we can talk about
equivalence classes being contained in the Julia set, filled-in Julia
set, or basin of infinity. We are particularly interested in this
equivalence relation restricted to the critical points of $P$.

\begin{definition}
  \label{def:critical-orbits}
  A \emph{critical orbit tail} is the intersection of the forward
  orbits of an equivalence class of critical points. Let
  $n_{\infty,F}(P)$ and $n_{\infty,J}(P)$ denote the number of
  infinite critical orbit tails contained in the Fatou and Julia set
  of $P$, respectively, and let
  $n_\infty(P) = n_{\infty,F}(P) + n_{\infty,J}(P)$ denote the total
  number of infinite critical orbit tails of $P$.
\end{definition}
Note that since $P$ has only finitely many critical points, the
intersection of forward orbits of an equivalence class is non-empty
and contains an actual orbit tail for each critical point in this
equivalence class. In particular, it is finite if and only if every
critical point in the equivalence has a finite forward orbit, and it
is infinite if and only if every critical point in the equivalence has
an infinite forward orbit.

\begin{definition}
  The \emph{weight} of a non-repelling cycle $Z$ is
  \[
\gamma(Z) = 
\begin{cases}
  0 & \text{ if $Z$ is super-attracting} \\
  1 & \text{ if $Z$ is attracting or irrationally indifferent} \\
  r & \text{ if $Z$ is a parabolic cycle with $r$ invariant cyles
    of petals}
\end{cases}
\]
We define $\gamma_{irr}(P)$ as the sum of the weights of all
irrational cycles, $\gamma_{ap}(P)$ as the sum of the weights of all
attracting and parabolic cycles, and $\gamma(P) = \gamma_{irr}(Z) +
\gamma_{ap}(Z)$ as the sum of the weights of all non-repelling cycles
of $P$.
\end{definition}
Note that $\gamma_{irr}(P)$ equals the number of irrationally
indifferent cycles, since each one of them has weight 1.  We will
derive our version of the Fatou-Shishikura inequality from the
following result, which is basically due to Kiwi.
\begin{theorem}
  \label{thm:KiwiFatouShishikura}
  $\gamma_{irr}(P)
  \le n_{\infty, J}(P)$.
\end{theorem}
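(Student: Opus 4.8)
The plan is to reduce the inequality to the case of a connected Julia set, where it is an immediate consequence of Kiwi's separation theorem, and then to recover the general case by renormalizing on the periodic components of $K(P)$.

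First suppose $J(P)$ is connected. To every irrationally indifferent cycle $Z$ I would assign a critical point $c(Z) \in J(P)$: if $Z$ is a Cremer cycle, take $c(Z)$ as in part (\ref{SeparationCremer}) of \autoref{thm:KiwiSeparation}; if $Z$ consists of the centers of a cycle of Siegel disks $S$, pick a non-preperiodic $z \in \partial S$ and take $c(Z)$ as in part (\ref{SeparationSiegel}). As recorded after \autoref{thm:KiwiSeparation}, each $c(Z)$ has infinite forward orbit, and the forward orbits of $c(Z_1)$ and $c(Z_2)$ are disjoint for $Z_1 \neq Z_2$. Since equivalent critical points share an orbit tail, the $c(Z)$ then lie in pairwise distinct critical equivalence classes, each giving a distinct infinite critical orbit; and that orbit is contained in $J(P)$ because $c(Z) \in J(P)$ and $J(P)$ is forward invariant. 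Thus $Z \mapsto [c(Z)]$ injects the irrationally indifferent cycles into the infinite critical orbits contained in $J(P)$, and since $\gamma_{irr}(P)$ counts exactly those cycles, $\gamma_{irr}(P) \le n_{\infty,J}(P)$.

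For disconnected $J(P)$, the filled-in Julia set $K(P)$ has several components. Any irrationally indifferent periodic point lies in a connected component $K_0$ of $K(P)$ that is not a single point (it contains a Siegel disk, or a P\'erez-Marco hedgehog in the Cremer case), and the component carrying a periodic point is itself periodic, say of period $n$ under $P$. By the standard theory of polynomial-like restrictions on periodic components (cf.\ \cite{DouadyHubbard1985}), one can choose $U \supset K_0$ and $V$ so that $(P^n, U, V)$ is polynomial-like of degree $\ge 2$ with filled-in Julia set $K_0$ and Julia set $\partial K_0 \subseteq J(P)$ (a point of $\partial K_0$ lying in the interior of $K(P)$ would force a neighborhood into $K_0$). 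By the Straightening Theorem \autoref{thm:Straightening} this is hybrid conjugate to a polynomial $\tilde P_{K_0}$ with connected Julia set, to which the connected case applies. Picking one $K_0$ per cycle of periodic components, every irrationally indifferent cycle of $P$ meets exactly one such cycle and restricts there to an irrationally indifferent cycle of $(P^n, U, V)$ of the same multiplier, matched by the hybrid conjugacy with one of $\tilde P_{K_0}$, and conversely; hence $\gamma_{irr}(P) = \sum_{K_0} \gamma_{irr}(\tilde P_{K_0})$. Composing, for each $K_0$, the injection from the connected case with the hybrid conjugacy and with the map sending a renormalization-critical point to the genuine critical point of $P$ it hits within $n$ steps, I obtain a map $\Phi$ from the irrationally indifferent cycles of $P$ to the infinite critical orbits contained in $J(P)$ (the orbit in question lies in $\bigcup_i \partial K_i \subseteq J(P)$, with $K_i = P^i(K_0)$, and is infinite because its renormalized counterpart is).

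The hard part will be showing that $\Phi$ is injective, so that $\gamma_{irr}(P) = \sum_{K_0}\gamma_{irr}(\tilde P_{K_0}) \le n_{\infty,J}(P)$. Cycles lying in different cycles of components present no problem, since their images lie in disjoint unions of components and hence in distinct critical classes. The delicate point is two distinct cycles $Z_1, Z_2$ inside the same cycle of components: one must rule out that their associated critical orbits in $\tilde P_{K_0}$, which are distinct by Kiwi's theorem, collapse onto a single critical class of $P$. This is where the disjointness of distinct components within a periodic cycle is used: it forces the time indices in any $P$-equivalence between the corresponding critical points to agree modulo $n$, which promotes a $P$-equivalence to a $P^n$-equivalence and contradicts the disjointness of the associated orbits of $\tilde P_{K_0}$. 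The remaining work is routine bookkeeping about hybrid conjugacy, critical points of iterates, and the point-set structure of $K(P)$.
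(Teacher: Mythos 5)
Your proposal is correct and follows essentially the same route as the paper: the connected case is exactly Kiwi's separation theorem (\autoref{thm:KiwiSeparation}) together with the disjointness of associated critical orbits, and the disconnected case is handled, as in the paper, by restricting $P^n$ to periodic cycles of components of $K(P)$ as polynomial-like maps, straightening via \autoref{thm:Straightening}, and pulling the count back. Your extra discussion of injectivity within a single cycle of components (component indices forcing the iteration times to match mod $n$, then promoting a $P$-equivalence to a $P^n$-equivalence) is sound and in fact makes explicit a point the paper passes over quickly.
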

\begin{proof}
  In the case where the Julia set $J(P)$ is connected, this is an
  immediate consequence of Kiwi's result,
  \autoref{thm:KiwiSeparation}. For every Cremer cycle and every
  Siegel cycle there is at least one associated critical point with
  infinite orbit in $J(P)$ which is not associated to any other Cremer
  or Siegel cycle. This implies that critical points associated to
  different Cremer or Siegel cycles are not equivalent, which shows
  $\gamma_{irr}(P) \le n_{\infty,J}(P)$.

  In the case of disconnected Julia set, we can decompose the dynamics
  of $P$ into a finite number of polynomials with connected Julia sets
  as follows.

  Every non-repelling cycle $Z=(z_1, \ldots, z_q)$ of $P$ is contained
  in some cycle $K(Z) = (K_1, \ldots, K_n)$ of periodic components of
  the filled-in Julia set $K(P)$. The period $n$ of $K(Z)$ always
  divides the period $q$ of $Z$, but it might be strictly
  smaller. Different periodic cycles either correspond to the same or
  to disjoint cycles of components. In this way we obtain a finite
  number of periodic cycles of components of the filled-in Julia
  set. 

  Let $(K_1, \ldots, K_n)$ be such a periodic cycle of components of
  $K(P)$, and let $J_k = \partial K_k$. Let
  $G(z) = \lim\limits_{n\to\infty} d^{-n} \log^+ P^n(z)$ be the
  associated Green's function for $P$. For $\eps > 0$ we define
  $K_1^\eps$ to be the connected component of the sub-level set
  $\{ G < \eps \} = \{ z \in \C : G(z) < \eps \}$ containing $K_1$.
  Since $\bigcap_{\eps > 0} K_1^\eps$ is a connected subset of $K(P)$
  containing $K_1$, it has to be equal to $K_1$. This means that we
  can choose $\eps > 0$ small enough such that $K_1^\eps$ does not
  contain any critical values of $P^n$ in $A_\infty(P)$, and that it
  is disjoint from $P^{-n}(K_1) \setminus K_1$, i.e., that it does not
  intersect any of the other preimages of $K_1$ under $P^n$. With this
  choice of $\eps$, define $V = K_1^\eps$ and $U$ to be the component
  of $P^{-n}(V)$ containing $K_1$ (which is also a component of
  $K_1^{\eps/d^n}$.) Then $U$ and $V$ are simply connected domains
  with $\overline{U} \subseteq V$, and $P^n$ is a proper analytic map
  from $U$ to $V$. Furthermore, since $P^n$ has an indifferent fixed
  point at $z_1 \in K_1 \subset U$, the Schwarz lemma shows that $P^n$
  cannot be a conformal map from $U$ to $V$, so $(P^n, U, V)$ is a
  polynomial-like map of some degree $d_1 \ge 2$. It also shows that
  $P^n$ has at least one critical point in $U$, which by the choice of
  $\eps$ above has to be in $K_1$, showing that $K_1$ is a continuum,
  not just a single point.  Furthermore,
  $P^n(K_1) = K_1 = P^{-n}(K_1) \cap U$, which shows that $K_1$ is a
  completely invariant compact subset of $U$. Since the complement of
  $K_1$ is connected, this shows that $K_1 = K(P^n, U, V)$ is the
  filled-in Julia set of $(P^n, U, V)$.

  By the Straightening Theorem, $(P^n,U,V)$ is hybrid conjugate to a
  polynomial $P_1$ with connected Julia set. Hybrid conjugacies
  preserve critical points and by \cite{PerezMarco1997} they also
  preserve multipliers of irrationally indifferent cycles, so every
  irrationally indifferent cycle of $P$ in $K = K_1 \cup \ldots K_n$
  corresponds to an irrationally indifferent cycle of $P_1$. Applying
  Kiwi's result (\autoref{thm:KiwiSeparation}) to $P_1$, we see that
  $P_1$ has at least one associated infinite critical orbit in
  $J(P_1)$ for every irrationally indifferent cycle, so $P^n$ has at
  least one associated infinite critical orbit in
  $J(P^n,U,V) \subset J_1 \subseteq J(P)$, corresponding to at least
  one infinite critical orbit of $P$ in
  $J_1 \cup \ldots \cup J_n = J(P) \cap (K_1 \cup \ldots \cup K_n)$.
  Since the cycles of filled-in Julia components are either disjoint
  or identical for different cycles, this shows that associated
  critical points of $P$ for different Cremer or Siegel cycles are not
  equivalent, finishing the proof.
\end{proof}

Our version of the Fatou-Shishikura inequality is the following.  The
main difference to the standard statement is that we replace the count
of critical points by the count of infinite critical orbit
tails. I.e., we do not count strictly preperiodic critical points at
all, and we do not double-count multiple critical points or critical
points whose forward orbits eventually collide.
\begin{theorem}
\label{thm:FatouShishikura}
  $\gamma(P) \le n_\infty(P)$.
\end{theorem}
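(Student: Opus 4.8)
The plan is to split $\gamma(P) = \gamma_{irr}(P) + \gamma_{ap}(P)$ against $n_\infty(P) = n_{\infty,J}(P) + n_{\infty,F}(P)$, matching non-repelling cycles of each type against infinite critical orbits in the part of the plane where those cycles live. The inequality $\gamma_{irr}(P) \le n_{\infty,J}(P)$ is \autoref{thm:KiwiFatouShishikura}, and the infinite critical orbits it produces lie in $J(P)$; it therefore remains to prove $\gamma_{ap}(P) \le n_{\infty,F}(P)$, with the corresponding critical orbits contained in the Fatou set. An orbit contained in $J(P)$ is disjoint from an orbit contained in the Fatou set, so adding the two inequalities will finish the proof. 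No assumption on the connectivity of $J(P)$ is needed for the attracting/parabolic half; the disconnected case has already been absorbed into \autoref{thm:KiwiFatouShishikura}.

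For the bound $\gamma_{ap}(P) \le n_{\infty,F}(P)$, fix an attracting or parabolic cycle $Z$ of $P$ that is not super-attracting, so its weight $r = \gamma(Z)$ is a positive integer ($r = 1$ if $Z$ is attracting, $r = r(Z)$ if $Z$ is parabolic). The immediate basin of $Z$ is a union of $r$ pairwise disjoint cycles of periodic Fatou components — which are simply connected, as all bounded periodic Fatou components of polynomials are — and by the classical theorem of Fatou (see \cite{Milnor2006} or \cite{carlesongamelin1993}) each of these $r$ component cycles contains a critical point of $P$; equivalently, the first-return map $g:W \to W$ on each such component cycle is a proper map of degree $\ge 2$ and hence, by Riemann--Hurwitz, has a critical point in $W$. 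Selecting one critical point from each such component cycle, over all non-super-attracting attracting and parabolic cycles, yields $\sum_Z \gamma(Z) = \gamma_{ap}(P)$ critical points of $P$. If two of them were equivalent, say $c_1$ in a component cycle $\mathcal C_1$ and $c_2$ in $\mathcal C_2$, then $P^m(c_1) = P^n(c_2)$ for some $m,n \ge 0$, and this common point would lie in a periodic Fatou component belonging to both $\mathcal C_1$ and $\mathcal C_2$, forcing $\mathcal C_1 = \mathcal C_2$; hence these $\gamma_{ap}(P)$ critical points are pairwise inequivalent and determine $\gamma_{ap}(P)$ distinct critical orbits, each contained in the Fatou set.

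It remains to check that each of these critical orbits is infinite. In the parabolic case this is automatic: the selected critical point lies in a periodic Fatou component $W$, and its forward orbit stays in the cycle of components of $W$ while converging to the parabolic cycle on $\partial W \subseteq J(P)$, so it is never periodic. In the attracting case we must exclude the possibility that the selected critical point — whose orbit converges to $Z$ — is preperiodic, i.e. that its orbit lands on $Z$; it suffices to show that the immediate basin of a non-super-attracting cycle always contains a critical point with infinite orbit. Uniformizing, the first-return map $g:B_0 \to B_0$ of an immediate basin component is conformally conjugate to a finite Blaschke product $B$ of degree $\delta = \deg g \ge 2$ fixing $0$ with $0 < |B'(0)| < 1$. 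Viewed as a rational map of $\hat\C$, $B$ also fixes $\infty$ with $|B'(\infty)| < 1$, and reflection across the unit circle (under which Blaschke products are symmetric) exchanges $0$ with $\infty$ and the critical points of $B$ inside $\D$ with those outside, there being none on the unit circle. Hence, if every critical point of $g$ in $B_0$ were preperiodic to $Z$, every critical point of $B$ in $\hat\C$ would be preperiodic, so $B$ would be a postcritically finite rational map of degree $\ge 2$ possessing a non-super-attracting fixed point — which is impossible, since a postcritically finite rational map has no non-repelling cycle other than super-attracting ones. So the immediate basin of $Z$ contains a critical point with infinite orbit, and we take it as the selected one. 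Consequently each of the $\gamma_{ap}(P)$ critical orbits is infinite and contained in the Fatou set, so $\gamma_{ap}(P) \le n_{\infty,F}(P)$; combined with \autoref{thm:KiwiFatouShishikura} and the disjointness of the two families of orbits, $\gamma(P) = \gamma_{irr}(P) + \gamma_{ap}(P) \le n_{\infty,J}(P) + n_{\infty,F}(P) = n_\infty(P)$.

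The one step that is not routine is the refinement in the attracting case: upgrading Fatou's ``the immediate basin contains a critical point'' to ``the immediate basin of a non-super-attracting cycle contains a critical point with infinite orbit'', which is exactly what licenses stating the inequality in terms of infinite critical orbits rather than critical points, since a critical point that lands on the cycle has a finite orbit and must not be counted. The remaining ingredients — the degree-$\ge 2$ property and simple connectivity of the immediate basin components, the inequivalence bookkeeping for critical orbits attached to distinct cycles and to distinct petal cycles of one parabolic cycle, and the $J(P)$/Fatou disjointness — are comparatively routine.
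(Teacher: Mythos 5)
Your proof is correct and follows essentially the same route as the paper: split $\gamma(P)=\gamma_{irr}(P)+\gamma_{ap}(P)$, quote \autoref{thm:KiwiFatouShishikura} for the Julia-set half, and match each attracting (non-super-attracting) cycle and each invariant cycle of petals with a critical point having infinite forward orbit in its cycle of Fatou components, with the same disjointness bookkeeping for distinct cycles of Fatou domains. The only difference is that the paper invokes as classical the fact that the immediate basin of an attracting, non-super-attracting cycle contains a critical point with \emph{infinite} orbit, whereas you derive this refinement from the Blaschke-product model of the return map together with the theorem that postcritically finite rational maps have only repelling or super-attracting cycles; that derivation is valid (and not circular, since that theorem has independent proofs), though heavier than the classical Koenigs-coordinate argument that yields the same refinement directly.
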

\begin{proof}
  Every rationally indifferent cycle of weight $r$ has $r$ invariant
  cycles of petals attached, and each of them contains at least one
  critical point with infinite forward orbit. Every attracting, but
  not super-attracting cycle contains at least one critical point with
  infinite forward orbit in its attracting cycle of Fatou
  domains. Critical points in disjoint cycles of Fatou domains cannot
  be equivalent, so this shows $\gamma_{ap}(P) \le
  n_{\infty,F}(P)$. From \autoref{thm:KiwiFatouShishikura} we get that
  $\gamma_{irr}(P) \le n_{\infty,J}(P)$. Adding up these inequalities
  we get $\gamma(P) \le n_\infty(P)$.
\end{proof}

One immediate consequence is the following more conventionally stated
version of the Fatou-Shishikura inequality.
\begin{corollary}
  The number of non-repelling cycles of a polynomial of degree $d \ge
  2$ is bounded by the number of critical orbit tails.
\end{corollary}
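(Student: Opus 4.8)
The plan is to read this off directly from \autoref{thm:FatouShishikura}, $\gamma(P) \le n_\infty(P)$. The point is that the quantity $\gamma(P)$ already counts every non-repelling cycle \emph{except} the super-attracting ones (which have weight $0$), so the only thing to add is a count of the super-attracting cycles, and these can be matched injectively with \emph{finite} critical orbits, which $n_\infty(P)$ does not see.

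First I would split the non-repelling cycles of $P$ into the $N_0$ super-attracting cycles and the remaining $N_1$ non-repelling cycles — attracting but not super-attracting, irrationally indifferent, or parabolic. Each cycle $Z$ of the second type has weight $\gamma(Z) \ge 1$ (exactly $1$ in the attracting and irrationally indifferent cases, and $r \ge 1$ in the parabolic case), while each super-attracting cycle contributes $0$ to $\gamma$. Summing over all non-repelling cycles therefore gives $N_1 \le \sum_Z \gamma(Z) = \gamma(P)$, and \autoref{thm:FatouShishikura} then yields $N_1 \le n_\infty(P)$.

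Next I would observe that every super-attracting cycle contains at least one critical point of $P$, and any such critical point is periodic, hence has a finite orbit. If critical points $c_1$ and $c_2$ lying in two super-attracting cycles $Z_1$ and $Z_2$ were equivalent, i.e.\ $P^m(c_1) = P^n(c_2)$ for some $m,n \ge 0$, then since both iterates are periodic points we would have $Z_1 = Z_2$; so distinct super-attracting cycles determine distinct critical orbits, all of them finite, and hence $N_0 \le n_{\mathrm{fin}}(P)$, where $n_{\mathrm{fin}}(P)$ denotes the number of finite critical orbits. Adding the two inequalities gives $N_0 + N_1 \le n_{\mathrm{fin}}(P) + n_\infty(P)$, and the right-hand side is exactly the total number of critical orbits, because every critical orbit is either finite or infinite. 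There is no real obstacle in this argument; the only step meriting a moment's care is that the critical orbits attached to super-attracting cycles are automatically disjoint from the infinite ones counted in \autoref{thm:FatouShishikura}, which holds because such an orbit is periodic and therefore finite.
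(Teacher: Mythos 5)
Your proof is correct and follows essentially the same route as the paper: bound the non-super-attracting non-repelling cycles by $\gamma(P) \le n_\infty(P)$ via \autoref{thm:FatouShishikura}, then note that distinct super-attracting cycles contribute distinct finite critical orbits, so the total count of critical orbits is at least the sum. Your extra remarks (weights $\ge 1$, disjointness of the finite orbits from the infinite ones) just make explicit what the paper's shorter argument leaves implicit.
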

\begin{proof}
  Let $P$ be a polynomial of degree $d \ge 2$ with $\gamma_0$
  super-attracting cycles of $f$, and $\gamma_1$ non-repelling cycles
  which are not super-attracting. Then $\gamma_1 \le \gamma(P) \le
  n_\infty(P)$ (by definition of $\gamma$ and
  \autoref{thm:FatouShishikura}), so the number of non-repelling
  cycles satisfies $\gamma_0 + \gamma_1 \le \gamma_0 +
  n_\infty(P)$. Distinct super-attracting cycles are non-equivalent
  finite critical orbits, so $f$ has at least $\gamma_0 + n_\infty(P)$
  distinct critical orbit tails.
\end{proof}

By the Straightening Theorem, the following generalization of the
results in this section to polynomial-like maps is immediate.
\begin{corollary}
  \label{cor:FatouShishikuraPolynomialLike}
  Let $(f,U,V)$ be a polynomial-like map of degree $d\ge 2$. Then
  $\gamma_{irr}(f,U,V) \le n_{\infty,J}(f,U,V)$ and
  $\gamma(f,U,V) \le n_\infty(f,U,V)$.
\end{corollary}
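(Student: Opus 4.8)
The plan is to reduce both inequalities to the polynomial case by straightening. First I would apply the Straightening Theorem (\autoref{thm:Straightening}) to $(f,U,V)$, obtaining a polynomial $P$ of degree $d$ together with a hybrid conjugacy $\phi$: a quasiconformal homeomorphism from a neighborhood of $K(f,U,V)$ onto a neighborhood of $K(P)$ with $\phi\circ f = P\circ\phi$ near $K(f,U,V)$ and $\bar\partial\phi = 0$ a.e.\ on $K(f,U,V)$. It then suffices to transfer the four quantities $\gamma_{irr}$, $\gamma$, $n_{\infty,J}$, $n_\infty$ across $\phi$ and invoke \autoref{thm:KiwiFatouShishikura} and \autoref{thm:FatouShishikura} for $P$. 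This is exactly the mechanism already used inside the proof of \autoref{thm:KiwiFatouShishikura}, so the work is only in checking that $\phi$ respects all the combinatorial data involved.

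Concretely I would verify three transfer facts, each essentially one line. (a) Being a conjugating homeomorphism near the filled-in Julia sets, $\phi$ carries $J(f,U,V)$ onto $J(P)$ and $\operatorname{int}K(f,U,V)$ onto $\operatorname{int}K(P)$, and maps periodic cycles in $K(f,U,V)$ onto periodic cycles of $P$ of the same period; since the local degree is a topological invariant, $\phi$ also maps the critical points of $f$ near $K(f,U,V)$ onto those of $P$ near $K(P)$, respecting the relations $f^m(c_1)=f^n(c_2)$, hence inducing a bijection of the corresponding critical orbits that preserves the finite/infinite dichotomy and the Julia/Fatou location, so that the Julia-part counts match. (b) Since $\bar\partial\phi=0$ a.e.\ on $K(f,U,V)$ and attracting, super-attracting and Siegel cycles lie in $\operatorname{int}K(f,U,V)$, $\phi$ is conformal near them and preserves their multipliers; parabolic cycles go to parabolic cycles with the same multiplier and the same number $r$ of invariant cycles of petals, and Cremer cycles go to Cremer cycles, all of this being recorded by the local topological dynamics on the petals and near the cycles. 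Hence every weight $\gamma(Z)$ is preserved, giving $\gamma_{irr}(f,U,V)=\gamma_{irr}(P)$ and $\gamma(f,U,V)=\gamma(P)$. (c) The critical points of $f$ in $U$ not covered by (a) are precisely those whose forward orbit eventually leaves $U$; under the straightening construction these correspond to the critical points of $P$ in $A_\infty(P)$, so with the natural adaptation of the definitions to polynomial-like maps one also gets $n_{\infty,J}(f,U,V)=n_{\infty,J}(P)$ and $n_\infty(f,U,V)=n_\infty(P)$.

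Granting (a)--(c), the corollary follows at once: \autoref{thm:KiwiFatouShishikura} gives $\gamma_{irr}(P)\le n_{\infty,J}(P)$ and \autoref{thm:FatouShishikura} gives $\gamma(P)\le n_\infty(P)$, and transporting both back along $\phi$ yields the claimed inequalities; the case of disconnected Julia set needs nothing extra, since $P$ may itself have disconnected Julia set and \autoref{thm:KiwiFatouShishikura} already handles that. The only place requiring a moment's care is the bookkeeping in (c) — making sure the count $n_\infty(f,U,V)$ one adopts for a polynomial-like map is the one matched to $n_\infty(P)$ by the straightening. If one instead prefers the convention that only critical orbits remaining in $U$ are counted, then $\gamma(f,U,V)\le n_\infty(f,U,V)$ is obtained not by transport but by re-running the short argument of \autoref{thm:FatouShishikura}: each attracting non-super-attracting cycle has a critical point with infinite orbit in its immediate basin in $\operatorname{int}K(f,U,V)$, each parabolic cycle of weight $r$ contributes $r$ such orbits, and combining this with $\gamma_{irr}(f,U,V)\le n_{\infty,J}(f,U,V)$ from the first part gives the inequality.
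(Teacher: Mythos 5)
Your proposal is correct and follows essentially the same route as the paper, which deduces the corollary from the Straightening Theorem by transferring cycles, weights, and critical orbits across the hybrid conjugacy. You even address the same counting subtlety the paper flags in its remark after the statement, namely that critical orbits eventually leaving $U$ correspond to critical points in $A_\infty(P)$ and may either be counted as infinite Fatou orbits or discarded by re-running the argument of \autoref{thm:FatouShishikura}.
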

The notation here is the obvious generalization of the notation for
polynomials. One little subtlety in this statement is that eventually
undefined critical orbits should be counted as infinite critical
orbits in the Fatou set, since after hybrid conjugacy they will be in
the basin of $\infty$. However, the proofs given above using the
decomposition into polynomial-like maps with connected Julia sets show
that we might as well discard these orbits completely and only count
critical orbits in the filled-in Julia set.

\section{Saturated Polynomials}
\label{sec:satur-polyn}
In this final section we are going to focus on polynomials for which
equality in one of our versions of the Fatou-Shishikura inequality
(\autoref{thm:KiwiFatouShishikura} or \autoref{thm:FatouShishikura})
holds and show that these polynomials do not have exotic Siegel disks.
\begin{definition}
  Let $P$ be a polynomial of degree $d \ge 2$. We say that $P$ is
  \emph{saturated} if $\gamma(P) = n_\infty(P)$, and we say that it is
  \emph{Julia-saturated} if $\gamma_{irr}(P) = n_{\infty,J}(P)$.
\end{definition}
By the discussion of the Fatou-Shishikura inequalities above, it is
clear that saturated polynomials have connected Julia sets.  It is
also easy to see that every saturated polynomial is Julia-saturated,
but the converse is obviously not true, as shown by polynomials with
disconnected Julia sets or polynomials with attracting and/or
parabolic domains which contain several critical orbits.

Intuitively, being saturated means that every super-attracting,
attracting, and irrationally indifferent cycle, as well as every
invariant cycle of petals, has exactly one associated infinite
critical orbit tail, and that all other critical orbits are strictly
preperiodic.

Another way to look at this condition is that a general polynomial
satisfies $\gamma(P)$ algebraic multiplier conditions (including
multiplicity conditions at parabolic points) and $d-1-n_\infty(P)$
``independent'' critical relations which adds up to
$d-1-(n_\infty(P)-\gamma(P))$ algebraic equations. If the associated
varieties in the $(d-1)$-dimensional parameter space of (normalized)
polynomials of the same degree intersect properly, then they should
determine an algebraic set of dimension $n_\infty(P)-\gamma(P)$. In
this point of view, being saturated means that the corresponding
algebraic set is finite, i.e., that $P$ is determined up to finite
ambiguity by its algebraic multiplier conditions and critical
relations. It turns out that this can be made precise in an algebraic
geometric way, but since we do not need it in our proof, we will not
go into details here.

We will first use this algebraic rigidity of saturated polynomials to
show that they do not have exotic Siegel disks, and then show that
Julia-saturated polynomials can be ``made saturated'', so that they do
not have exotic Siegel disks either.

\begin{proposition}
  \label{prop:perturb}
  Let $P$ be a saturated polynomial of degree $d \ge 2$.  Then there
  exists $\rho>0$, a $J$-stable analytic family of polynomial-like maps
  $(P_a, U_a, V)$ of degree $d$ with $K(P) \subset U_a \subset V$ for
  $|a| < \rho$ such that $P_0 = P$ and for every irrationally
  indifferent cycle $Z = (z_1,\ldots,z_q)$ the family $P_a$ is a
  quadratic perturbation at $z_1$ and a sub-quadratic
  perturbation at $z_2,\ldots,z_q$.
\end{proposition}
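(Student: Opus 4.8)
The plan is to build the perturbation family explicitly by adding a single quadratic bump at one point of each irrationally indifferent cycle, and then to invoke the strong Fatou–Shishikura inequality to force $J$-stability. Let me describe the construction. Enumerate the irrationally indifferent cycles $Z^{(1)}, \ldots, Z^{(N)}$ of $P$, where $N = \gamma_{irr}(P)$, and let $z^{(j)} = z_1^{(j)}$ be a chosen point in $Z^{(j)}$. Pick a polynomial $\chi(z) = \prod_{j=1}^N (z - z^{(j)})^2$ that vanishes to order exactly $2$ at each marked point and is nonzero elsewhere, and set $P_a(z) = P(z) + a\,\chi(z)$. This is a polynomial of the same degree $d$ (since $N \le d-1$ by Fatou–Shishikura, the added term has degree $2N \le 2d-2$; if necessary, replace $\chi$ by a perturbation that keeps the leading coefficient fixed, e.g. divide out — more carefully, one takes $P_a(z) = P(z) + a\,\chi(z)$ only for the polynomial-like restriction, not insisting $P_a$ be a genuine degree-$d$ polynomial, which is exactly why we pass to polynomial-like maps). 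Applying \autoref{prop:PolynomialLikeStability} to $f = P$ with $K$ a compact neighborhood of $K(P)$ containing all critical points of $P$, we obtain $\rho > 0$, a domain $V$, and domains $U_a$ for $|a| < \rho$ so that $\mathcal{F} = \{(P_a, U_a, V) : |a| < \rho\}$ is an analytic family of polynomial-like maps of degree $d$. Near $z^{(j)}$, since $\chi$ vanishes to order exactly $2$ there, $P_a(z) = P(z) + a(z - z^{(j)})^2 g_j(z)$ with $g_j(z^{(j)}) \ne 0$, so the family is a quadratic perturbation at $z_1^{(j)}$; at the other points $z_2^{(j)}, \ldots, z_q^{(j)}$ of the cycle, $\chi$ is nonzero, hence $\partial_a P_a$ is a nonvanishing constant there and in particular contributes nothing to the order-$\ge 2$ coefficients, so the family is sub-quadratic (indeed the perturbation is constant in the local expansion up to affine terms) at those points.

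The substantive step is showing that $0 \in S(\mathcal{F})$, i.e. that after shrinking $\rho$ the whole family is $J$-stable. For this I would argue that $P$ has the maximal possible number of non-repelling cycles compatible with its critical orbit structure, so nothing can "move." Concretely: by saturation, $\gamma(P) = n_\infty(P)$, and in particular each irrationally indifferent cycle has exactly one associated infinite critical orbit in $J(P)$, each attracting/parabolic cycle accounts for exactly one infinite critical orbit in the Fatou set, and every remaining critical orbit is strictly preperiodic (finite). Now a non-persistent indifferent periodic point would, under perturbation, either become attracting (absorbing a critical orbit) or merge with other cycles; in either case the perturbed map $P_a$ for generic small $a$ would have $\gamma(P_a) > \gamma(P)$ or would free up a critical orbit — but $P_a$ still has at most $n_\infty(P_a)$ non-repelling cycles by \autoref{cor:FatouShishikuraPolynomialLike}, and $n_\infty(P_a) \le n_\infty(P)$ since adding a bump that vanishes to order exactly $2$ at the marked points does not create new critical points there and, after possibly shrinking $\rho$, does not change the combinatorics of which critical orbits are finite (finite critical orbits are stable: a critical relation observable at time $T$ persists under small perturbations if no new collisions are forced, which one arranges by choosing $\chi$ to avoid the critical points of $P$). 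The cleanest way to package this is: the set of parameters for which $\gamma(P_a) = \gamma(P)$ and all of $P$'s indifferent cycles persist with unchanged multiplier is open; its interior contains $0$ because near $0$ the critical orbit count cannot increase and the non-repelling cycle count cannot decrease without violating Fatou–Shishikura. Hence all indifferent periodic points of $P_a$ are persistent for $a$ near $0$, so $0$ lies in the interior of that set, i.e. $0 \in S$.

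The main obstacle I expect is the bookkeeping in the previous paragraph: making precise that "$n_\infty(P_a) \le n_\infty(P)$ for small $a$" and that "no indifferent cycle of $P$ can lose persistence without the count of non-repelling cycles exceeding $n_\infty$." The first requires knowing that strictly preperiodic critical orbits stay finite (or at worst become infinite in the Fatou set, which is harmless for the $\gamma_{irr} \le n_{\infty,J}$ bound) and that the genuinely infinite critical orbits cannot split into more infinite orbits under a small perturbation; one controls this by choosing $\chi$ so that $P_a$ and $P$ agree to high order at — or simply away from — all critical points, and by using that the relevant critical relations are "observable at a finite time $T$." The second is really the contrapositive of the Ma\~n\'e–Sad–Sullivan dichotomy combined with the sharpness of Fatou–Shishikura for saturated $P$: if some indifferent point were not persistent, an arbitrarily small perturbation would turn it attracting, and then that perturbed polynomial-like map would have $\gamma \ge \gamma(P) + 1 > n_\infty(P) \ge n_\infty(P_a)$, contradicting \autoref{cor:FatouShishikuraPolynomialLike}. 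Once $0 \in S$ is established, \autoref{prop:JStability} gives a neighborhood $B \ni 0$ in $S$, and shrinking $\rho$ so that $D_\rho(0) \subseteq B$ completes the proof.
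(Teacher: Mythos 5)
Your overall strategy---an explicit algebraic bump $P_a = P + a\chi$, restriction to a polynomial-like family via \autoref{prop:PolynomialLikeStability}, and the strong Fatou--Shishikura inequality to force $J$-stability---is the same as the paper's, but the specific choice $\chi(z) = \prod_j (z - z^{(j)})^2$, vanishing only at one marked point per irrational cycle, does not deliver the conclusion. First, at the remaining points $z_2,\ldots,z_q$ of a cycle of period $q\ge 2$ you have $\chi(z_i)\ne 0$, so the constant coefficient $f_0(a) = P(z_i) + a\chi(z_i)$ of the local expansion varies with $a$; by the paper's definition a (sub-quadratic) perturbation family must have $f_0$ and $f_1$ constant, so $P_a$ is not a sub-quadratic perturbation at those points. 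In fact the cycle $Z$ is destroyed for $a\ne 0$ (since $P_a(z_2)\ne z_3$), its multiplier is not preserved, and the intended later application of \autoref{prop:composition} to make $P_a^q$ an essentially quadratic perturbation at $z_1$ breaks down. Second, your $J$-stability argument needs that all of $P$'s own indifferent cycles---including the parabolic ones and the irrational ones of period at least $2$---persist with unchanged multipliers and weights for every small $a$; only then does a non-persistent indifferent point at some $a\ne 0$ force $\gamma(P_a,U_a,V) > \gamma(P) = n_\infty(P)$ and contradict \autoref{cor:FatouShishikuraPolynomialLike}. With your $\chi$ the parabolic points (where $\chi\ne 0$) typically split under perturbation, so they are not persistent and $0\notin S$; note that the loss of one of $P$'s own indifferent cycles does not by itself raise $\gamma$, so your claimed contradiction (``an indifferent point turns attracting, hence $\gamma\ge\gamma(P)+1$'') does not apply to that scenario. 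Third, $n_\infty(P_a,U_a,V)\le n_\infty(P)$ is not automatic: if $\chi$ merely avoids the critical points, the critical points move with $a$ and strictly preperiodic critical orbits generically become infinite, possibly inside the Julia set, which ruins the count. You flag this as ``the main obstacle'' but your construction does not implement a fix.

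The paper's remedy is to make the perturbing polynomial vanish on a larger finite set. Let $T$ be a time at which all critical equivalences and finite critical orbits are observable, let $N>d$ exceed all parabolic tangency indices, let $B$ consist of all non-repelling periodic points together with the critical points and their orbits up to time $T$, let $B_1$ contain one representative per irrationally indifferent cycle, $B_2 = B\setminus B_1$, and set $Q(z)=\prod_{b\in B_1}(z-b)^2\prod_{b\in B_2}(z-b)^N$, $P_a = P + aQ$. Then $P_a$ agrees with $P$ to order $N$ at every point of $B_2$ and to order $2$ at every point of $B_1$, which simultaneously (i) makes $P_a$ a quadratic perturbation at $z_1$ and a genuine sub-quadratic perturbation at $z_2,\ldots,z_q$, (ii) preserves all multipliers and parabolic tangency indices, hence all weights of $P$'s non-repelling cycles, and (iii) preserves critical points, their multiplicities, and all critical relations observable by time $T$, giving $n_\infty(P_a,U_a,V)\le n_\infty(P)$. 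With these three facts the Fatou--Shishikura contradiction does establish $J$-stability; your construction would need to be upgraded in exactly this way.
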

\begin{proof}
  We are going to build the perturbation in such a way that all
  multipliers, including multiplicities at parabolic points, and all
  critical relations are preserved.

  Let $T$ be an integer large enough so that all critical relations
  are ``observable'' by time $T$, i.e., such that for all equivalent
  critical points $c_1$ and $c_2$ there exist $m,n \le T$ with
  $P^m(c_1) = P^n(c_2)$, and for all critical points $c$ with finite
  orbits there exist $m<n\le T$ with $P^m(c) = P^n(c)$. Choose another
  integer $N>d$ larger than all tangency indices of parabolic periodic
  cycles.

  Let $B$ be the finite set of all non-repelling periodic points of
  $P$, as well as the critical points and their forward orbits up to
  the $T$-th iterate. Let $B_1 \subseteq B$ be a set of
  representatives of irrationally indifferent periodic cycles,
  containing one irrationally indifferent periodic point out of each
  cycle, and let $B_2 = B \setminus B_1$. Define
  \begin{equation*} 
    Q(z) = \prod_{b \in B_1} (z-b)^2 \prod_{b \in B_2} (z-b)^N
    \quad \text{ and } \quad 
    P_a(z) = P(z)+aQ(z).
  \end{equation*} 
  We claim that $P_a$ has the desired properties.
  
  First of all, there exist domains $U$ and $W$ such that $K(P)
  \subset U \subset W$ and such that $(P,U,W)$ is polynomial-like of
  degree $d$. \autoref{prop:PolynomialLikeStability} shows the
  existence of $\rho > 0$, and domains $U_a$ and $V$ with $K(P) \cup B
  \subset U_a \subset V$ such that $(P_a, U_a, V)$ is polynomial-like
  of degreee $d$ for $|a| < \rho$. Whenever $a$ appears in the rest of
  the proof, we will implicitly assume that $|a| < \rho$.
  
  For the rest of the argument, note that $Q$ vanishes to order $N$ at
  all points of $B_2$, so that $P_a^{(k)}(b) = P^{(k)}(b)$ for all
  $|a|<\rho$, $b \in B_2$ and $0 \le k < N$. Furthermore, the chain
  rule for higher derivatives shows that if $b, \, P(b), \ldots,
  P^{n-1}(b) \in B_2$, then $(P_a^n)^{(k)}(b) = (P^n)^{(k)}(b)$ (these
  are the $k$-th derivatives of the $n$-th iterates) for all
  $|a|<\rho$, and $0 \le k < N$.
  
  If $c$ is a critical point of $P$ of multiplicity $m$, then $m+1 \le
  d < N$ and $c \in B_2 \subset U_a$, so $c$ is still a critical point of
  multiplicity $m$ of $(P_a,U_a,V)$. If $c$ has a finite orbit for $P$, then
  there exist $m<n \le T$ such that $P^m(c) = P^n(c)$, and since
  $P^k(c) \in B \subset U_a$ for $0 \le k \le T$, we also get
  that $P_a^m(c) = P_a^n(c)$, so $c$ has a finite orbit for
  $(P_a,U_a,V)$ as well. If two critical points $c_1$ and $c_2$ are
  equivalent for $P$, then there exist $m,n \le T$ such that $P^m(c_1)
  = P^n(c_2)$, and by the same argument as before this implies that
  $P_a^m(c_1) = P_a^n(c_2)$, so they are equivalent for $P_a$, too. In
  particular this argument shows that $n_\infty(P_a,U_a,V) \le
  n_\infty(P)$.
  
  If $Z=(z_1, \ldots, z_q)$ is a rationally indifferent cycle for $P$
  with multiplier $\lambda = e^{2\pi i s/t}$, with $s$, $t$ relatively
  prime integers, $t \ge 1$, and $P^{tq}(z) = z_1 + a_{m+1}
  (z-z_1)^{m+1} + \ldots$, with $a_{m+1} \ne 0$, then we have
  $N>\tau(Z) = m+1$ by definition. Since $Z \subseteq B_2$, we conclude
  that $(P_a)^{tq}(z) = z_1 + a_{m+1}(z-z_1)^{m+1} + \ldots$ as
  well. This shows that the weight of the parabolic cycle is the same
  for $P_a$ as it is for $P$.

  Let $Z=(z_1, \ldots, z_q)$ be an irrationally indifferent cycle of
  $P$. We may assume that $z_1 \in B_1$, and $z_2, \ldots, z_q \in B_2$.
  It is immediate from the definition of $P_a$ that it is a quadratic
  perturbation at $z_1$ and a sub-quadratic perturbation at $z_2,
  \ldots, z_q$. This also implies that $Z$ is again an irrationally
  indifferent cycle for $P_a$ with the same multiplier as for $P$.

  In order to show $J$-stability it is enough to show that for every
  sufficiently small $|a|$ every indifferent periodic point for
  $(P_a, U_a, V)$ is persistent, since the set $S$ of $J$-stable
  parameters is the interior of the set parameters for which all
  indifferent periodic points are persistent. As the previous
  paragraphs show, all indifferent periodic points for $P=P_0$ persist
  for all $a$. Now assume that there exists $a \ne 0$ with a
  non-persistent indifferent periodic point in $U_a$.  Then the
  polynomial-like map $(P_a, U_a, V)$ has
  $n_\infty(P_a,U_a,V) \le n_\infty(P)$ infinite critical orbit tails
  in $U_a$, and since it has at least one additional indifferent
  cycle, we know that $\gamma(P_a,U_a,V) > \gamma(P)$. By assumption
  $P$ is saturated, so $\gamma(P) = n_\infty(P)$, implying that
  $\gamma(P_a,U_a,V) > n_\infty(P_a,U_a,V)$. However, this inequality
  contradicts our version of the Fatou-Shishikura inequality for
  polynomial-like maps, \autoref{cor:FatouShishikuraPolynomialLike}.
\end{proof}

Now we are finally in a position to prove the central result of this
paper.
\begin{theorem}
  \label{thm:SaturatedBrjuno}
  Saturated polynomials do not have exotic Siegel disks.
\end{theorem}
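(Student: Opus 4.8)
The plan is to combine \autoref{prop:perturb} with \autoref{cor:JStableUniformlyLin} and \autoref{cor:EssentialQuadraticLinearizableFamily}, reducing the claim to the quadratic case handled by Yoccoz. Suppose for contradiction that a saturated polynomial $P$ of degree $d \ge 2$ has an exotic Siegel disk, i.e., an irrationally indifferent cycle $Z = (z_1, \ldots, z_q)$ contained in the Fatou set with non-Brjuno multiplier $\lambda = e^{2\pi i \alpha}$, $\alpha \in \R \setminus \cB$. Applying \autoref{prop:perturb}, I obtain $\rho > 0$ and a $J$-stable analytic family of polynomial-like maps $(P_a, U_a, V)$ with $P_0 = P$ and $K(P) \subset U_a \subset V$, such that the family $P_a$ is a quadratic perturbation at $z_1$ and a sub-quadratic perturbation at each of $z_2, \ldots, z_q$.

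The next step is to pass from the individual perturbations at the cycle points to a single essentially quadratic perturbation of the first-return map. Since $z_1$ is still a periodic Siegel point for $P_a$ of period $q$, consider the composition $G_a = P_a \circ \cdots \circ P_a$ ($q$-fold) expressed as a perturbation family at $z_1$; tracking the cycle, this is the composition of the sub-quadratic perturbations at $z_q, \ldots, z_2$ followed by the quadratic (hence essentially quadratic, since $d_2 = 1$ and there are no higher terms to worry about) perturbation at $z_1$. Repeatedly applying \autoref{prop:composition} — each time composing an essentially quadratic family with a sub-quadratic one, in either order — shows that $G_a$ is an essentially quadratic perturbation at $z_1$. (A quadratic perturbation is in particular sub-quadratic, so the base case is covered.) Concretely, writing $g_a(z) = G_a(z + z_1) - z_1 = \lambda z + \sum_{k \ge 2} g_k(a) z^k$, the coefficients $g_k(a)$ are polynomials in $a$ with $\deg g_2 = 1$ and $\deg g_k < k - 1$ for $k > 2$.

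Now I invoke $J$-stability: since $0 \in S(\cF)$ and $z_1$ is an irrationally indifferent periodic Siegel point of $P_0 = P$, \autoref{cor:JStableUniformlyLin} provides a neighborhood $B$ of $0$ in $S$ on which the first-return family $g_a$ is uniformly linearizable — after possibly shrinking, uniform linearizability holds for $|a| \le r$ for some $r > 0$. Thus $g_a$ is an essentially quadratic family, uniformly linearizable for $|a| \le r$, whose linear part is $\lambda = e^{2\pi i \alpha}$ with $\alpha \notin \cB$. But \autoref{cor:EssentialQuadraticLinearizableFamily} (equivalently \autoref{prop:maxprinc} together with Yoccoz's theorem) says exactly that such a family forces $\alpha \in \cB$. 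This contradiction shows $P$ has no exotic Siegel disk.

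The main obstacle is bookkeeping in the second step: one must be careful that the composition of perturbation families is taken at the correct base points (the map $P_a$ is a perturbation family at $z_1$ with target $z_2$, then at $z_2$ with target $z_3$, and so on, each time with $P_a'(z_j) \ne 0$ since the cycle is indifferent, so the hypothesis $f'(z_0) f'(w_0) \ne 0$ of \autoref{prop:composition} holds), and that "quadratic" at $z_1$ indeed implies "essentially quadratic" so that the inductive hypothesis of \autoref{prop:composition} applies at the first step. Everything else is a direct citation of the already-established propositions.
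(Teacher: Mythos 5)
Your proposal is correct and follows essentially the same route as the paper's proof: apply \autoref{prop:perturb} to get the $J$-stable essentially quadratic/sub-quadratic perturbation family, compose along the cycle via \autoref{prop:composition} to make the return map at $z_1$ an essentially quadratic perturbation, use \autoref{cor:JStableUniformlyLin} for uniform linearizability, and conclude with \autoref{cor:EssentialQuadraticLinearizableFamily}. One harmless slip: a quadratic perturbation is \emph{not} sub-quadratic (sub-quadratic requires $\deg f_2 < 1$, while quadratic has $\deg f_2 = 1$), but your argument never actually needs this--only the correct implication quadratic $\Rightarrow$ essentially quadratic, which you state elsewhere.
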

\begin{proof}
  Let $P$ be a saturated polynomial and $Z=(z_1, \ldots, z_q)$ be a
  Siegel cycle of $P$ with multiplier $(P^q)'(z_1) = \lambda = e^{2\pi
    i \alpha}$. Let $(P_a, U_a, V)$ for $|a| < \rho$ be the $J$-stable
  analytic family given by \autoref{prop:perturb}. Since $P_a$ is a
  quadratic perturbation at $z_1$ and a sub-quadratic
  perturbation at $z_2, \ldots, z_q$, the $q$-th iterate
  $P_a^q$ is an essentially quadratic perturbation of $P^q$ at $z_1$
  by \autoref{prop:composition}. Then
  \autoref{cor:JStableUniformlyLin} establishes the existence of $r
  \in (0,\rho)$ such that $g_a(z) = P_a^q(z+z_1) - z_1$ is uniformly
  linearizable for $|a| \le r$. Since $g_a$ is conjugate to $P_a^q$ by
  a simple translation, independent of $a$, it is still uniformly
  linearizable and an essentially quadratic perturbation of $g_0$ at
  $0$, with $g_a'(0)= (P_a^q)'(z_1) = e^{2\pi i \alpha}$. By
  \autoref{cor:EssentialQuadraticLinearizableFamily} this implies
  $\alpha \in \cB$.
\end{proof}

\JuliaSaturatedTheorem
\begin{proof}
  The decomposition technique for the case of disconnected Julia sets
  as employed in the proof of \autoref{thm:KiwiFatouShishikura} shows
  that it is enough to prove the theorem for the case of polynomials
  $P$ with connected Julia set. In \cite{McMullen1988}*{Proposition
    6.9}, McMullen proves the following, using quasiconformal surgery
  (which in that paper is called ``conformal surgery'') to glue
  ``rigid models'' into each Fatou component: There exists a
  polynomial $Q$ and a quasiconformal map $\phi: \C \to \C$ with
  $\phi(J(P))=J(Q)$ and $\phi \circ P = Q \circ \phi$ on $J(P)$, such
  that $Q$ is rigid on the Fatou set in the following sense. Every
  periodic Fatou component is either super-attracting, a Siegel disk,
  or a parabolic petal. All critical points in super-attracting basins
  are periodic, and all critical points in preimages of
  super-attracting basins are preperiodic. All critical points in
  preimages of Siegel disks are preperiodic. Every parabolic basin
  contains exactly one critical orbit tail (which is necessarily
  infinite.)\footnote{Note that this refers to the whole parabolic
    basin of a periodic cycle of petals, not just the petals
    themselves. McMullen constructs $Q$ so that there exists a
    periodic petal $U_0$ with exactly one critical point $c_0$, and
    that any other component of the parabolic basin has at most one
    critical point, and so that all these critical points are mapped
    to $c_0$ by some iterate of $Q$.} The polynomials $P$ and $Q$ have
  the same number of Siegel cycles, Cremer cycles, and invariant
  cycles of petals. The total number of attracting plus
  super-attracting cycles is also the same, only that $Q$ has no
  attracting cycles. Assuming that $P$ is Julia-saturated, we know
  that every infinite critical orbit tail in the Fatou set of $Q$
  corresponds to an invariant cycle of petals. The number of critical
  orbits in the Julia set is the same for $P$ and $Q$, and by
  assumption equals the number of irrationally indifferent cycles of
  $P$. This shows that the total number of infinite critical orbits of
  $Q$ equals the number of irrationally indifferent cycles plus the
  number of invariant cycles of petals. Since $Q$ has no attracting
  periodic points, this shows that $\gamma(Q) = n_\infty(Q)$, so $Q$
  is saturated. By \autoref{thm:SaturatedBrjuno}, every Siegel disk
  for $Q$ has a Brjuno rotation number, and since rotation numbers of
  corresponding Siegel disks of $P$ and $Q$ are the same, this shows
  that all Siegel disks for $P$ have Brjuno rotation numbers. For some
  more details about McMullen's construction and the argument that
  these ``stable conjugacies'' preserve parabolic points, Cremer points,
  Siegel disks, as well as their rotation numbers, see
  \autoref{app:stable-persistent}.
\end{proof}

We conclude this section with an application to certain concrete
families of polynomials.

\MainCorollary

\begin{proof}
  The family $P_{c,d}$ has all critical points at 0, thus it can have
  at most one infinite critical orbit tail. Whenever it has an irrationally
  indifferent periodic point, it is saturated. In the family
  $Q_{c,d}$, for $c=0$ the map $Q_{0,d}(z) = z + z^d$ has all critical
  points in the immediate basins of the parabolic fixed point at zero,
  so there are no Siegel disks at all. For $c \ne 0$, the map
  $Q_{c,d}$ has a fixed point of multiplicity $d-1$ at 0, thus it has
  $d-2$ fixed attracting petals, and $\gamma(0) = d-2$. Whenever there
  is a Siegel cycle, we get $\gamma(P) = d-1$, and so $P$ is
  saturated, too.
\end{proof}

For rational functions the techniques in this paper do not
work. However, using Shishikura's quasiconformal surgery technique
from \cite{Shishikura1987} instead of polynomial-like maps, and using
a rigidity result of McMullen from \cite{McMullen1987}, a similar
result for a more restricted class of rational functions is proved in
\cite{Manlove2015}, and will be pursued in a forthcoming paper
\cite{GeyerManlove2015}.

\appendix
\section{Stable Conjugacies and Persistent Periodic Points}
\label{app:stable-persistent}
Prompted by the observation of the referee that the definitions of
persistent periodic points in \cite{DouadyHubbard1985} and
\cite{ManeSadSullivan1983} are not equivalent, here is a short
overview of the problems and inconsistencies in the literature, as
well as our chosen way of dealing with it. The results from both of
these papers which we are mostly interested in are that for
holomorphic families of maps there is an open and dense set where all
indifferent periodic points are persistent, and that the family is
\emph{stable} (also called \emph{$J$-stable}) on this set. We start by
reviewing the concept of stable conjugacies, based on McMullen's
approach in \cite{McMullen1988}.

\subsection{Stable Conjugacies}
\begin{definition}
  Let $(f_1, U_1, V_1)$ and $(f_2, U_2, V_2)$ be polynomial-like maps
  with Julia sets $J_1$ and $J_2$, respectively. A homeomorphism
  $\phi:J_1 \to J_2$ is a \emph{stable conjugacy} between
  $(f_1, U_1, V_1)$ and $(f_2, U_2, V_2)$ iff
  $\phi \circ f_1 = f_2 \circ \phi$ on $J_1$, and if $\phi$ extends to
  a quasiconformal map $\phi: \C \to \C$.
\end{definition}
\begin{remark}
  Note that it is not assumed that the conjugacy extends to a
  neighborhood of $J_1$, only that $\phi$ as a map extends
  quasiconformally.
\end{remark}
One remarkable fact is that stable conjugacies preserve local dynamics
at indifferent periodic points, in the following sense:
\begin{proposition}
  \label{prop:stable-conj}
  Let $(f_1, U_1, V_1)$ and $(f_2, U_2, V_2)$ be polynomial-like maps
  with Julia sets $J_1$ and $J_2$, respectively, and assume that there
  exists a stable conjugacy $\phi:J_1 \to J_2$ between them. Then the
  following statements hold true.
  \begin{enumerate}
  \item\label{stable-conj-1} If $z_1 \in J_1$ is an indifferent
    periodic point of $f_1$ of period $q$, with multiplier $\lambda$,
    then $z_2 = \phi(z_1) \in J_2$ is an indifferent periodic point of
    $f_2$, again of period $q$ and with multiplier $\lambda$. In the
    case of parabolic periodic points, the tangency index $\tau$ is
    also preserved under stable conjugacy.
  \item\label{stable-conj-2} If $D_1$ is a periodic Siegel disk for
    $f_1$, of period $q$, with associated multiplier $\lambda$, then
    $D_2 = \phi(D_1)$ is a periodic Siegel disk for $f_2$, with the
    same period $q$ and associated multiplier $\lambda$.
  \end{enumerate}
\end{proposition}
\begin{remark}
  This theorem also holds for families of rational maps, with
  essentially the same proof. In that case, one can also prove that
  stable conjugacies preserve Herman rings and their rotation
  numbers.
\end{remark}
\begin{proof}
  By \cite{McMullen1988}, stable conjugacies induce quasisymmetric
  conjugacies between ideal boundaries of Fatou components, and the
  quasisymmetric conjugacy class on the ideal boundary of a fixed
  Fatou component determines whether that component is
  (super-)attracting, parabolic, or a Siegel disk, and in the case of
  a Siegel disk it also determines the rotation number. By passing to
  an iterate, the same is true for periodic Fatou components. In
  particular, this shows that stable conjugacies map Siegel disks to
  Siegel disks with the same period and same rotation number,
  establishing property (\ref{stable-conj-2}). Furthermore, this shows
  that stable conjugacies map any parabolic periodic Fatou component
  of $f_1$ associated to some parabolic periodic point $z_1$ to a
  parabolic periodic Fatou component of $f_2$ associated to the
  parabolic periodic point $z_2 = \phi(z_1)$. This implies that stable
  conjugacies preserve parabolic periodic points, together with the
  number of associated parabolic periodic Fatou components, as well as
  their cyclic ordering, and this shows that stable conjugacies
  preserve periods, multipliers, and tangency indices of parabolic
  periodic points.

  The one property which remains to show is that stable conjugacies
  map Cremer points for $f_1$ to Cremer points of $f_2$ with the same
  period and multiplier. By passing to an iterate, we may assume that
  we have a Cremer fixed point for $f_1$, and by conjugation with
  translations we may assume that $z_1 = 0$ and $z_2 = \phi(z_1) = 0$.
  Since $z_2 \in J_2$, and since stable conjugacies (and their
  inverses) map parabolic points to parabolic points, we know that
  $z_2$ must be either a Cremer point or repelling. If $z_2$ was
  repelling for $f_2$, we would get that there exists $\delta_2 > 0$ and
  $c_2 \in (0,1)$ with $|f_2^{-n}(z)| \le c_2^n$ for $z \in J_2$,
  $|z|<\delta_2$, where $f_2^{-n}$ denotes the local branch of the
  inverse of $f_2^n$ fixing $0$. Since quasiconformal maps are
  H\"older continuous, this implies that there exist $\delta_1 > 0$
  and $c_1 \in (0,1)$ with $|f_1^{-n}(z)| \le c_1^n$ for $z \in J_1$,
  $|z|<\delta_1$, contradicting the assumption that $|f_1'(0)| = 1$.

  The fact that multipliers of Cremer points are invariant under
  stable conjugacies follows from P\'erez-Marco's theory of hedgehogs,
  see \cite{PerezMarco1997}, and also \cite{Risler1999} and
  \cite{Childers2008}. We write $f_k'(0) = e^{2\pi i \alpha_k}$, with
  rotation numbers $\alpha_1,\alpha_2 \in (0,1)$ for $f_1$ and
  $f_2$. Fixing a small disk $D_1$ centered at $0$, there exists a
  locally completely invariant continuum
  $K_1\subseteq \overline{D_1} \cap J_1$ with connected complement
  $\C \setminus K_1$, containing $0$ and at least one point on
  $\partial D_1$. Furthermore, there is a conformal map
  $h: \C\setminus \overline{\D} \to \C \setminus K_1$, and the
  conjugate map $g_1 = h^{-1} \circ f_1 \circ h$ extends to an
  analytic circle diffeomorphism with rotation number $2\pi
  \alpha_1$. Equivalently, the map $f_{\C \setminus K_1}$ induces an
  analytic circle diffeomorphism with rotation number $2\pi \alpha_1$
  on the prime end boundary of $\C \setminus K_1$. Then the image
  $K_2 = \phi(K_1)$ of $K_1$ under the stable conjugacy is a hedgehog
  for $f_2$ in $D_2 = \phi(D_1)$, and $\phi$ conjugates $f_1|_{K_1}$
  to $f_2|_{K_2}$, which implies that $\phi$ induces an
  orientation-preserving conjugacy between the induced maps on prime
  ends of $f_1|_{\C \setminus K_1}$ and $f_2|_{\C \setminus K_2}$. By
  invariance of rotation numbers under orientation-preserving
  conjugacy and by P\'erez-Marco's result that the induced circle
  diffeomorphism on prime ends of a hedgehog has the same rotation
  number as the Cremer fixed point, we get that $\alpha_1 = \alpha_2$,
  and thus $f_1'(0) = f_2'(0)$ as claimed.
\end{proof}
  
\subsection{Persistent Periodic Points and
  \texorpdfstring{$J$-stability}{J-stability}}
The paper \cite{ManeSadSullivan1983} deals with analytic families of
rational maps, whereas \cite{DouadyHubbard1985} concerns analytic
families of polynomial-like maps. For the main ideas of this appendix,
this difference is immaterial, and for simplicity we will illustrate
it first in analytic families of polynomials. Also, since we are
mostly interested in local perturbations, we will assume that our
families are parametrized over the unit disk.

\begin{definition}
  Let $\cF = \{ f_a : a \in \D \}$ be an analytic family of
  polynomials and let $a_0 \in \D$ be a parameter for which
  $f_{a_0}$ has an indifferent periodic point $z_0 = f_{a_0}^n(z_0)$
  of minimal period $n \ge 1$. Then this periodic point is
  \begin{enumerate}
  \item \label{def:mss-persistent} \emph{MSS-persistent} if the
    projection $P_n: M_n \to \D$, $P_n(a,z) = a$, from the set $M_n =
    \{ (a,z) \in \D \times \C : f_a^n(z) = z, \, f_a^k(z) \ne z \text{
      for } 0<k<n \}$ is locally injective near $(a_0, z_0)$, and if
    $\lambda(a,z) = (f_{a}^n)'(z)$ is locally constant near $(a_0,
    z_0)$ on $M_n$;
  \item \label{def:dh-persistent} \emph{DH-persistent} if for every
    neighborhood $V$ of $z_0$ there exists a neighborhood $W$ of $a_0$
    such that for every $a \in W$, the map $f_a$ has an indifferent
    periodic point of period $n$ in $V$;
  \item \label{def:ms-persistent} \emph{MS-persistent} if there exists
    an analytic map $w$ defined in a neighborhood $U$ of $a_0$ such
    that $w(a_0) = z_0$, $f_a^n(w(a)) = w(a)$, and $|(f_a^n)'(w(a))| =
    1$ for all $a \in U$.
  \end{enumerate}
\end{definition}
Definition (\ref{def:mss-persistent}) is
from the original paper of Ma\~n\'e, Sad, and Sullivan
\cite{ManeSadSullivan1983}, definition (\ref{def:dh-persistent})
is from Douady and Hubbard \cite{DouadyHubbard1985}, and definition
(\ref{def:ms-persistent}) appears both in McMullen's book
\cite{McMullen1994} and in the paper of McMullen and Sullivan
\cite{McMullenSullivan1998}.

In the case where the multiplier $\lambda$ satisfies $\lambda \ne 1$,
these different definitions are actually equivalent, but in the case
$\lambda = 1$ they are not. I am indebted to the referee for pointing
out the following example.

\begin{example}
  Let $f_a(z) = z + z^2 (z-a)^2$. Then for the parameter $a=0$, the
  fixed point $z=0$ is DH-persistent and MS-persistent, but not
  MSS-persistent.
\end{example}
Checking these claims is straightforward. Since every map in the
family $f_a$ has exactly two fixed points, at $z=0$ and $z=a$, both
indifferent with multiplier $\lambda = 1$, the projection $P_1:M_1 \to
\D$ in the definition of MSS-persistence is locally 2-to-1 near
$(0,0)$, so $z=0$ is not MSS-persistent. It is even easier to see that
the conditions in DH-persistence and MS-persistence are
satisfied. Note that the map $f_0$ on its Julia set $J_0$ is not
topologically conjugate to $f_a$ on $J_a$ for $a \ne 0$, since $f_0$
has one fixed point in $J_0$, whereas $f_a$ for $a \ne 0$ has two
fixed points in $J_a$.

All of the later papers refer to \cite{ManeSadSullivan1983} for the
proof of $J$-stability, without giving a proof that the set $S$ (the
interior of the set of parameters for which every indifferent periodic
point is persistent) is the same for these different versions of the
definition. In this paper we chose to use the original definition from
Ma\~n\'e, Sad, and Sullivan, whose proof for rational functions can be
copied directly for polynomial-like maps.

Another subtle point where the literature is inconsistent is the
distinction between stable conjugacy and quasiconformal conjugacy in a
neighborhood of the Julia set. The original proof in
\cite{ManeSadSullivan1983} combined with the extended $\lambda$-lemma
in \cite{SullivanThurston1986} and \cite{BersRoyden1986} gives stable
conjugacy on $S$ for families of rational maps, and the same proof
applies to families of polynomial-like maps. We believe that stable
conjugacy can be promoted to a quasiconformal conjugacy in a
neighborhood of the Julia set (as claimed in
\cite{DouadyHubbard1985}), using the techniques of
\cite{McMullenSullivan1998}, but the situation it a little murky,
especially for families of polynomial-like maps, and we are not aware
of an actual proof of this claim in the literature. In light of this,
and since for our purposes stable conjugacy is sufficient (even though
it makes some arguments a little harder), we chose to work only with
the slightly weaker result which gives stable conjugacies in
neighborhoods of stable parameter values.

\bibliography{everything}

\end{document}